\newcommand{\ignore}[1]{}
\newtheorem{theorem}{Theorem}[section]
\newtheorem{proposition}{Proposition}[section]
\newtheorem{rem}{Remark}[section]
\newtheorem{assumption}{Assumption}[section]
\numberwithin{equation}{section}
\numberwithin{theorem}{section}
\numberwithin{lemma}{section}
\numberwithin{proposition}{section}
\numberwithin{cor}{section}
\numberwithin{definition}{section}
\numberwithin{cons}{section}
\numberwithin{rem}{section}
\numberwithin{exa}{section}
\numberwithin{table}{section}
\numberwithin{figure}{section}
\numberwithin{algo}{section}
\newcommand{\intt}{\int\hspace{-.2cm}\int}
\def\cals_+{{\cals_+}}
\def\cals{{\mathcal{S}}}
\newcommand{\HD}{\textcolor{red}}
\def\beq{\begin{equation}}
\def\eeq{\end{equation}}
\def\bals{\begin{align*}}
\def\eals{\end{align*}}
\def\bal{\begin{align}}
\def\eal{\end{align}}
\begin{document}

\title{Two-sample tests for relevant differences in the \\[.2cm]  eigenfunctions of covariance operators
}

\author{
Alexander Aue\footnote{Department of Statistics, University of California, One Shields Avenue, Davis, CA 95616, USA,  email: \tt{aaue@ucdavis.edu}}
\and Holger Dette\footnote{Fakult\"at f\"ur Mathematik, Ruhr-Universit\"at Bochum, Bochum, Germany, email: \tt{holger.dette@rub.de}}
\and Gregory Rice\footnote{Department of Statistics and Actuarial Science, University of Waterloo, Waterloo, ON, Canada, email: \tt{grice@uwaterloo.ca}}
}

\date{\today}
\maketitle

\begin{abstract}
\setlength{\baselineskip}{1.66em}
This paper deals with two-sample tests for functional time series data, which have become widely available in conjunction with the advent of modern complex observation systems. Here, particular interest is in evaluating whether two sets of functional time series observations share the shape of their primary modes of variation as encoded by the eigenfunctions of the respective covariance operators. To this end, a novel testing approach is introduced that connects with, and extends, existing literature in two main ways. First, tests are set up in the relevant testing framework, where interest is not in testing an exact null hypothesis but rather in detecting deviations deemed sufficiently relevant, with relevance determined by the practitioner and perhaps guided by domain experts. Second, the proposed test statistics rely on a self-normalization principle that helps to avoid the notoriously difficult task of estimating the long-run covariance structure of the underlying functional time series. The main theoretical result of this paper is the derivation of the large-sample behavior of the proposed test statistics. Empirical evidence, indicating that the proposed procedures work well in finite samples and compare favorably with competing methods, is provided through a simulation study, and an application to annual temperature data.
\medskip

\noindent {\bf Keywords:} Functional data; Functional time series; Relevant tests; Self-normalization; Two-sample tests.

\end{abstract}

\setlength{\baselineskip}{1.66em}
\section{Introduction}
\label{sec1}
\def\theequation{2.\arabic{equation}}
\setcounter{equation}{0}

This paper develops testing tools for two independent sets of functional observations, explicitly allowing for temporal dependence within each set. Functional data analysis has become a mainstay for dealing with those complex data sets that may conceptually be viewed as being comprised of curves. Monographs detailing many of the available statistical procedures for functional data are \citet{ramsay:silverman:2005} and \citet{horvkoko2012}. This type of data naturally arises in various contexts such as environmental data \citep{aue:dubartnorinho:hormann:2015}, molecular biophysics \citep{tavakoli:panaretos:2016}, climate science \citep{zhang2011,aue:rice:sonmez:2018}, and economics \citep{kowal:matteson:ruppert:2019}. Most of these examples intrinsically contain a time series component as successive curves are expected to depend on each other. Because of this, the literature on functional time series has grown steadily; see, for example, \citet{hoermann2010}, \citet{panaretos:tavakoli:2013} and the references therein.

The main goal here is towards developing two-sample tests for comparing the second order properties of functional time series data. Two-sample inference and testing methods for curves have been developed extensively by several authors. \citet{hall:vankeilegom:2007} were concerned with the effect of pre-processing discrete data into functions on two-sample testing procedures. \citet{horvath:kokoszka:reeder:2013} investigated two-sample tests for the equality of means of two functional time series taking values in the Hilbert space of square integrable functions, and \citet{dette:kokot:aue:2019} introduced multiplier bootstrap-assisted two-sample tests for functional time series taking values in the Banach space of continuous functions. \citet{panaretos:2010}, \citet{fremdt:horvath:kokoszka:steinebach:2013}, \citet{pigoli:2014},
{\cite{Paparoditis2016} and  {\cite{Guo2016}  provided procedures for testing the equality of covariance operators in functional samples.

While general differences between covariance operators can be attributed to differences in the eigenfunctions of the operators, eigenvalues of the operators, or perhaps both, we focus here on constructing two sample tests that take aim only at differences in the eigenfunctions. The eigenfunctions of covariance operators hold a special place in functional data analysis due to their near ubiquitous use in dimension reduction via functional principal component analysis (FPCA). FPCA is the basis of the majority of inferential procedures for functional data. In fact, an assumption common to a number of such procedures is that observations from different samples/populations share a common eigenbasis generated by their covariance operators; see \cite{benko:hardle:kneip:2009} and \cite{pomann:2016}. FPCA is arguably even more crucial to the analysis of functional time series, since it underlies most forecasting and change-point methods, see e.g. \cite{aue:dubartnorinho:hormann:2015}, \cite{hyndman:shang:2009}, and \cite{aston:kirch:2012AAS}. The tests proposed here are useful both for determining the plausibility that two samples share similar eigenfunctions, or whether or not one should pool together data observed in different samples for a joint analysis of their principal components.
 We illustrate these applications in Section \ref{sec4} below in an analysis of annual temperature profiles recorded  at several locations, for which the shape of the eigenfunctions can help in the interpretation of geographical differences in the primary modes of temperature variation over time. A more detailed argument for the usefulness and impact of such tests on validating climate models is given in the introduction of \citet{zhangshao2015}, to which the interested reader is referred to for details.

The procedures introduced in this paper  are noteworthy in at least two respects. First, unlike existing literature, they are phrased in the relevant testing framework. In this paradigm, deviations from the null are deemed of interest only if they surpass a minimum threshold set by the practitioner. Classical hypothesis tests are included in this approach if the threshold is chosen to be equal to zero. There are several advantages coming with the relevant framework. In general, it avoids Berkson's consistency problem \citep{berkson} that any consistent test will reject for arbitrarily small differences if the sample size is large enough. More specific to functional data, the $L^2$-norm sample mean curve differences might not be close to zero even if the underlying population mean curves coincide. The adoption of the relevant framework typically comes at the cost of having to invoke involved theoretical arguments.  {A recent review of  methods for testing  relevant hypotheses in  two sample problems  with   one-dimensional  data from a biostatistics perspective can be  found in \citet{wellek}, while Section \ref{sec2} specifies the details important here.}

Second, the proposed two-sample tests are built using self-normalization, a recent concept for studentizing test statistics introduced originally for univariate time series in \citet{shao2010} and \citet{shazha2010}. When conducting inference with time series data, one frequently encounters the problem of having to estimate the long-run variance in order to scale the fluctuations of test statistics. This is typically done through estimators relying on tuning parameters that ideally should adjust to the strength of the autocorrelation present in the data. In practice, the success of such methods can vary widely.
As a remedy, self-normalization is a tuning parameter-free method that achieves standardization, typically through recursive estimates. The advantages of such an approach for testing relevant  hypotheses of parameters of   functional time series were recently recognized in
 \citet{detkokvol2018}. In this paper, we develop  a concept of self-normalization for   the problem of testing for relevant  differences between the eigenfunctions of two covariance operators
 in functional data.  \citet{zhangshao2015} is the work most closely related to the results presented below, as it pertains to self-normalized two-sample tests for eigenfunctions and eigenvalues in  functional time series.
An  important difference to this work is that the methods proposed here  do not require a  dimension reduction  of the eigenfunctions but compare the functions directly  with respect to a norm
in the $L^{2}$-space. A  further  crucial difference
is that their paper is in the classical testing setup, while ours is in the strictly relevant setting, so that the contributions are not directly comparable on the same footing---even though we report the outcomes from both tests on the same simulated curves in Section \ref{sec-simul}. There, it is found that, despite the fact that the proposed test is constructed to detect relevant differences, it appears to compare favorably against the test of \citet{zhangshao2015} when the difference in eigenfunctions is large. In this sense, both tests can be seen as complementing each other.

The rest of the paper is organized as follows. Section \ref{sec2} introduces the framework, details model assumptions and gives the two-sample test procedures as well as their theoretical properties. Section \ref{sec-simul} reports the results of a comparative simulation study. Section \ref{sec4} showcases an application of the proposed tests to Australian temperature curves obtained at different locations during the past century or so. Section \ref{sec:conclusions} concludes.
Finally some technical  details used  in the arguments of Section \ref{sec22}  are given in Section \ref{sec:proofs}.

\section{Testing the similarity of two eigenfunctions  }
\label{sec2}
\def\theequation{2.\arabic{equation}}
\setcounter{equation}{0}
Let $L^{2}([0,1])$ denote the common space of square integrable functions $f\colon[0,1] \to  \mathbb{R} $ with  inner  product $\langle f_{1}, f_{2}\rangle =  \int_{0}^1 f_{1}(t)f_{2}(t) dt$ and norm $\| f\| = \big (  \int_{0}^1 f^{2}(t)dt  \big)^{1/2}$.
Consider  two independent stationary functional time series  $(X_t)_{t \in \mathbb{Z}}$ and  $(Y_t)_{t \in \mathbb{Z}}$  in $L^2([0,1])$ and assume that  each $X_t$  and $Y_{t}$ is centered and square integrable, that is $\mathbb{E}[X_t]=0$, $\mathbb{E}[Y_t]=0$ and $\mathbb{E}[ \|X_t\|^{2}] < \infty $, $\mathbb{E}[ \|Y_t\|^{2}] < \infty $, respectively. In practice centering can be achieved by subtracting the sample mean function estimate
and this will not change our results. Denote by
\begin{eqnarray}\label{1.1}
  C^X (s,t) &=& \sum^\infty_{j=1} \tau^X_j v^X_j(s) v^X_j(t), \\ \label{1.2}
  C^Y (s,t) &=& \sum^\infty_{j=1} \tau^Y_j v^Y_j(s) v^Y_j(t)
\end{eqnarray}
the corresponding  covariance operators; see Section 2.1 of \cite{buecher2018} for a detailed discussion of expected values in Hilbert spaces.
The eigenfunctions of the kernel integral operators with kernels $C^X$ and $C^Y$, corresponding to the ordered eigenvalues  $\tau^X_1 \geq  \tau^X_2 \geq  \cdots $ and $\tau^Y_1 \geq     \tau^Y_2 \geq   \cdots $, are denoted by $ v^X_1, v^X_2, \ldots$ and $ v^Y_1, v^Y_2, \ldots$, respectively.
We are interested in testing the similarity of the  covariance operators $C^X$ and $C^Y$ by comparing their eigenfunctions $ v^X_j$ and $ v^Y_j$ of order $j$ for some $j \in \mathbb{N}$. This is framed as the relevant hypothesis testing problem
  \begin{equation}\label{2.21:func}
    H^{(j)}_0 \colon \| v^X_j - v^Y_j  \|^2 \leq \Delta_j
    ~~~~\mbox{ versus} ~~~~
    H^{(j)}_1 \colon\| v^X_j - v^Y_j  \|^2 > \Delta_j ,
  \end{equation}
 where $\Delta_j >0 $ is a pre-specified constant representing the maximal value for the squared distances $\| v^X_j - v^Y_j  \|^2$ between the eigenfunctions which can be accepted as scientifically insignificant.
  In order to make the comparison between the eigenfunctions meaningful, we assume throughout this paper  that $\langle v^X_{j}, v^Y_{j} \rangle \geq 0$ for all $j \in \mathbb{N}$.
The choice of the  threshold  $\Delta_j >0 $ depends on the specific application and is essentially defined by the change size one is really interested in from a scientific viewpoint.
In particular, the choice $\Delta_j =0$ gives the classical hypotheses $H^{c}_{0}\colon  v^X_j =v^Y_j$ versus  $H^{c}_{1}\colon  v^X_j  \not =v^Y_j$.
We argue, however, that often it is well known that the eigenfunctions, or other parameters for that matter, from different samples will not precisely coincide. Further there is frequently no actual interest in arbitrarily small differences between the eigenfunctions. For this reason, $\Delta_{j}>0$ is assumed throughout.

Observe also that a similar hypothesis testing problem could be formulated for relevant differences of the eigenvalues $\tau^X_j- \tau^Y_j$ of the covariance operators. We studied the development of such tests alongside those presented below for the eigenfunctions, and found, interestingly, that they generally are less powerful empirically. An elaboration and explanation of this is detailed in Remark \ref{eig-rem} below. The arguments presented there are also applicable to tests based on direct long-run variance estimation.

The proposed approach is based on an  appropriate estimate, say $ \hat D^{(j)}_{m,n}$,  of the squared $L^{2}$-distance $\| v^X_j - v^Y_j  \|^2$ between the eigenfunctions,  and
the null hypothesis  in \eqref{2.21:func} is rejected for large values of this estimate. It turns out that the
(asymptotic) distribution of this distance depends sensitively on all eigenvalues and eigenfunctions of the covariance operators $C^{X}$ and $C^{Y}$ and on
the dependence structure of the
underlying processes. To address this problem  we propose a self-normalization of  the statistic $ \hat D^{(j)}_{m,n}$.
Self-normalization is a well-established concept in the time series literature and was introduced in two seminal
papers   by  \cite{shao2010} and \cite{shazha2010} for the construction of confidence intervals  and change point analysis,
respectively. More recently, it has been developed further  for the specific needs of functional data by \cite{zhang2011}  and   \cite{zhangshao2015}; see also  \cite{shao2015} for a recent review on self-normalization.   In the present context,
 where one is interested in hypotheses of the form \eqref{2.21:func},  a non-standard approach  of self-normalization is necessary to obtain a  distribution-free test, which is technically demanding due to the implicit definition of the eigenvalues and  eigenfunctions
 of the  covariance operators.
 For this reason, we first present the main idea of our approach in Section \ref{sec21} and defer a detailed discussion to the subsequent Section \ref{sec22}.

 \subsection{Testing for relevant differences between eigenfunctions}
\label{sec21}

If $X_1,\ldots, X_m$ and $Y_1,\ldots, Y_n$ are the two samples, then
\begin{equation}\label{1.5}
\hat C_m^X (s,t) = \frac {1}{m} \sum^m_{i=1} X_i(s) X_i(t),
\qquad
\hat C_n^Y (s,t) = \frac {1}{n} \sum^n_{i=1} Y_i(s) Y_i(t)
\end{equation}
are the common estimates of the covariance operators \citep{ramsay:silverman:2005,horvkoko2012}. Denote by $\hat \tau^X_j, \hat \tau^Y_j$ and $\hat v^X_j, \hat v^Y_j$ the corresponding eigenvalues and eigenfunctions.
Together, these define the canonical estimates of the respective population quantities in \eqref{1.1} and  \eqref{1.2}. Again, to make the comparison between the eigenfunctions meaningful, it is assumed throughout this paper that the inner product of $\langle \hat v^X_j , \hat v^Y_j\rangle$ is nonnegative for all $j$, which can be achieved in practice by changing the sign of one of the eigenfunction estimates if needed. We use the statistic
\begin{equation}\label{1.4}
 \hat D^{(j)}_{m,n} = \| \hat v^X_j - \hat v^Y_j \|^2 = \int^1_0 (\hat v^X_j(t) - \hat v^Y_j(t))^2 dt
\end{equation}
to estimate the squared distance
\begin{equation}
\label{dj}
D^{(j)} = \|  v^X_j  - v^Y_j  \|^{2} = \int^1_0 ( v^X_j(t) - v^Y_j(t))^2 dt
\end{equation}
 between the $j$th population eigenfunctions. The null hypothesis will be rejected for large values of $\hat D^{(j)}_{m,n}$ compared to $\Delta_j$.
In the following, a self-normalized test statistic based on  $\hat D^{(j)}_{m,n}$ will be constructed; see \cite{detkokvol2018}.
To be precise, let $\lambda \in [0,1]$ and define
\begin{equation}\label{1.5seq}
\hat C_m^X (s,t,\lambda ) = \frac {1}{\lfloor m \lambda \rfloor} \sum^{\lfloor m \lambda \rfloor}_{i=1} X_i(s) X_i(t), \qquad
\hat C_n^Y (s,t,\lambda ) = \frac {1}{\lfloor n \lambda \rfloor} \sum^{\lfloor n \lambda \rfloor}_{i=1} Y_i(s) Y_i(t)
\end{equation}
as the sequential version of the estimators  in \eqref{1.5}, noting that the sums are defined as $0$ if ${\lfloor m \lambda \rfloor} < 1$.
Observe that, under suitable assumptions detailed in Section \ref{sec22}, the statistics
 $\hat C_m^X (\cdot ,\cdot ,\lambda ) $ and $ \hat C_n^Y (\cdot ,\cdot ,\lambda)$
are consistent estimates of the covariance operators  $C^{X}$ and $C^Y$, respectively,
whenever $ 0 < \lambda \leq 1$.
The corresponding sample eigenfunctions of $\hat C^X_m (\cdot, \cdot, \lambda)$ and $\hat C^Y_n (\cdot, \cdot, \lambda)$  are denoted by $\hat v^X_j(t, \lambda)$ and $\hat v^Y_j(t,\lambda)$, respectively,
assuming throughout that $\langle \hat v^X_{j},  \hat v^Y_{j} \rangle \geq 0$. Define the stochastic process
\begin{equation}\label{2.5}
  \hat D^{(j)}_{m,n} (t, \lambda) = \lambda (\hat v^X_j (t,\lambda) - \hat v^Y_j (t,\lambda)),
  \qquad t \in [0,1]~,~\lambda \in [0,1],
\end{equation}
and note that the statistic $\hat D^{(j)}_{m,n}$ in \eqref{1.4} can be represented as
\begin{equation}\label{2.6}
  \hat D^{(j)}_{m,n} = \int^1_0 (\hat D^{(j)}_{m,n} (t,1))^2 dt.
\end{equation}
Self-normalization is enabled through the statistic
\begin{equation}\label{2.7}
  \hat V^{(j)}_{m,n} = \Big( \int^1_0 \Big( \int^1_0 (\hat D^{(j)}_{m,n} (t, \lambda))^2  dt - \lambda^2 \int^1_0 (\hat D^{(j)}_{m,n} (t,1))^2dt \Big)^2
  \nu ( d \lambda) \Big)^{1/2} ,
\end{equation}
where  $\nu $ is a probability measure on the interval $(0,1]$. Note that, under appropriate assumptions, the statistic
$  \hat V^{(j)}_{m,n} $ converges to $0$ in probability. However, it can be proved that  its scaled version  $\sqrt{m+n}   \hat V^{(j)}_{m,n} $
converges in distribution to a random variable, which is positive with probability $1$. More precisely,
it is shown in Theorem \ref{thm2.1} below that, under an appropriate set of assumptions,
\begin{equation}\label{weak}
\sqrt{m+n}
\big ( {\hat D^{(j)}_{m,n}- D^{(j)}} , {\hat V^{(j)}_{m,n}} \big  ) \stackrel{\mathcal{D}}{\longrightarrow} \Big ( \zeta_j \mathbb{B} (1),
\Big \{ \zeta_j^{2} \int^1_0 \lambda^2 (\mathbb{B}(\lambda) - \lambda \mathbb{B}(1))^{2} \nu ( d \lambda )  \Big \}^{1/2} \Big)
\end{equation}
as $m,n \to \infty$, where  $D^{(j)} $ is defined in \eqref{dj}.
{
Here  $\{ \mathbb{B} (\lambda) \}_{\lambda \in [0,1]}$ is a Brownian motion on the interval $[0,1]$ and
 $\zeta_j  \geq 0$ is a constant, which is assumed to be strictly positive  if $D_{j} >0$ (the square $\zeta_j^{2}$  is akin to a long-run variance parameter).}
Consider then the test statistic
\begin{equation}\label{2.6a}
\hat{\mathbb{W}}^{(j)}_{m,n} := \frac {\hat D^{(j)}_{m,n}- \Delta_j}{\hat V^{(j)}_{m,n}}.
\end{equation}
Based on this, the null hypothesis in \eqref{2.21:func} is rejected whenever
\begin{equation}  \label{testone}
\hat{\mathbb{W}}^{(j)}_{m,n}  > q_{1 - \alpha},
\end{equation}
  where $q_{1 - \alpha}$ is the $(1- \alpha)$-quantile of the distribution of the random
  variable
  \begin{equation}\label{wvar}
   \mathbb{W}:=  \frac {\mathbb{B}(1)}{ \{ \int^1_0 \lambda^2 (\mathbb{B}(\lambda) - \lambda \mathbb{B}(1))^{2} \nu (  d \lambda ) \}^{1/2}} .
\end{equation}
The quantiles of this distribution do not depend on the long-run variance, but on the measure $\nu$ in the statistic $  \hat V^{(j)}_{m,n} $
used for self-normalization. An  approximate $P$-value of the test can be calculated as
\begin{align}\label{p-val-calc}
  p= \mathbb{P}(\mathbb{W} > \hat{\mathbb{W}}^{(j)}_{m,n}).
\end{align}
The following theorem shows that the test just constructed keeps a desired level in large samples and has power increasing to one with the sample sizes.

\begin{theorem} \label{thm1}
If the weak convergence in \eqref{weak} holds, then the test \eqref{testone} has asymptotic level $\alpha$ and is consistent
 for the relevant hypotheses in \eqref{2.21:func}. In particular,
 \begin{eqnarray}\label{test-bev}
  \lim_{m,n \to \infty} \mathbb{P} ( \hat{\mathbb{W}}^{(j)}_{m,n}  > q_{1 - \alpha} ) &=& \left \{ \begin{array}{c@{\quad}cc}
                      0 & \mbox{if} & D^{(j)} < \Delta_j. \\
                      \alpha &  \mbox{if} & D^{(j)} = \Delta_j. \\
                      1 & \mbox{if} & D^{(j)} > \Delta_j.
                    \end{array} \right.
\end{eqnarray} \hfill $\Box$
\end{theorem}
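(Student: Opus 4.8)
The plan is to derive Theorem \ref{thm1} directly from the joint weak convergence \eqref{weak} by the continuous mapping theorem, reducing all three regimes to a single reformulation of the rejection event that isolates the deterministic drift $\sqrt{m+n}\,(\Delta_j - D^{(j)})$. Writing $N = m+n$ and abbreviating $\hat D = \hat D^{(j)}_{m,n}$, $\hat V = \hat V^{(j)}_{m,n}$, $D = D^{(j)}$, I would first observe that, since $\hat V \geq 0$, on the event $\{\hat V > 0\}$
\[
\hat{\mathbb{W}}^{(j)}_{m,n} > q_{1-\alpha}
\iff
\sqrt{N}(\hat D - D) - q_{1-\alpha}\sqrt{N}\hat V > \sqrt{N}(\Delta_j - D),
\]
the set $\{\hat V = 0\}$ being negligible in the limit. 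Setting $R_N := \sqrt{N}(\hat D - D) - q_{1-\alpha}\sqrt{N}\hat V$, the (joint) convergence \eqref{weak} and the continuous mapping theorem give $R_N \stackrel{\mathcal{D}}{\longrightarrow} R := \zeta_j\mathbb{B}(1) - q_{1-\alpha}\,\zeta_j\, G$, where $G := \{\int_0^1 \lambda^2(\mathbb{B}(\lambda) - \lambda\mathbb{B}(1))^2\nu(d\lambda)\}^{1/2}$. Everything then reduces to comparing the tight sequence $R_N$ against the drift $c_N := \sqrt{N}(\Delta_j - D)$.

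For the boundary case $D = \Delta_j$ the drift vanishes, so $\mathbb{P}(\hat{\mathbb{W}}^{(j)}_{m,n} > q_{1-\alpha}) = \mathbb{P}(R_N > 0) \to \mathbb{P}(R > 0)$. Here I would use that $D = \Delta_j > 0$ forces $\zeta_j > 0$ (by the assumption accompanying \eqref{weak}), so that $R = \zeta_j(\mathbb{B}(1) - q_{1-\alpha}G)$ with $\zeta_j > 0$ and $G > 0$ almost surely; the scale $\zeta_j$ then cancels in the ratio and $R > 0 \iff \mathbb{B}(1)/G > q_{1-\alpha} \iff \mathbb{W} > q_{1-\alpha}$, which equals $\alpha$ by the definition of the quantile $q_{1-\alpha}$ in \eqref{wvar}. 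This is precisely the point at which the self-normalization pays off: the nuisance scale disappears and the pivot \eqref{wvar} emerges. For $D < \Delta_j$ one has $c_N \to +\infty$, and for $D > \Delta_j$ one has $c_N \to -\infty$; since $R_N$ is tight, comparing against an arbitrary level $M$ and letting $M \to \infty$ via the Portmanteau inequalities yields $\mathbb{P}(R_N > c_N) \to 0$ and $\to 1$ respectively, giving the first and third lines of \eqref{test-bev}.

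Given that \eqref{weak} is taken as a hypothesis, the argument is essentially bookkeeping, and only two points need care rather than real work. The first is that the limiting law must be atomless at the relevant threshold, both to pass from $\mathbb{P}(R_N > 0)$ to $\mathbb{P}(R > 0)$ and to equate $\mathbb{P}(\mathbb{W} > q_{1-\alpha})$ with $\alpha$; this follows because $\mathbb{W}$ and $R$ are smooth functionals of Brownian motion and $G > 0$ almost surely, so their laws are continuous. The second is the degenerate case $\zeta_j = 0$, which can occur only when $D = 0 < \Delta_j$: there $\sqrt{N}(\hat D - D)$ and $\sqrt{N}\hat V$ both collapse to $0$ in probability, but since $\hat D \stackrel{\mathbb{P}}{\longrightarrow} 0$ while $\Delta_j > 0$, the numerator $\hat D - \Delta_j$ stays bounded away from $0$ and $\hat{\mathbb{W}}^{(j)}_{m,n} \to -\infty$, so the conclusion $\mathbb{P}(\hat{\mathbb{W}}^{(j)}_{m,n} > q_{1-\alpha}) \to 0$ is unaffected. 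The main (modest) obstacle is thus the careful treatment of the non-centrality drift together with the atomless-law technicality, not any deep probabilistic estimate.
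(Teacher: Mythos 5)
Your proof is correct, and at its core it is the same argument as the paper's: both rest on the hypothesis \eqref{weak}, the continuous mapping theorem, and the sign of the drift $\sqrt{m+n}\,(\Delta_j - D^{(j)})$. The difference lies in the decomposition, and it buys something concrete. The paper forms the self-normalized ratio, establishing $(\hat D^{(j)}_{m,n} - D^{(j)})/\hat V^{(j)}_{m,n} \stackrel{\mathcal{D}}{\longrightarrow} \mathbb{W}$ as in \eqref{thm2.1a} and then rewriting the rejection probability as in \eqref{power} with the random drift $(\Delta_j - D^{(j)})/\hat V^{(j)}_{m,n}$, using $\hat V^{(j)}_{m,n} \stackrel{\mathbb{P}}{\to} 0$. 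That ratio step needs the limit of the denominator to be almost surely positive, i.e.\ $\zeta_j > 0$, which is why the paper must treat $D^{(j)} = 0$ separately; and in that case it reaches into the proof of \eqref{weak} (via Proposition \ref{d-approx-1}) to get $\sqrt{m+n}\,\hat D^{(j)}_{m,n} = o_{\mathbb{P}}(1)$ and $\sqrt{m+n}\,\hat V^{(j)}_{m,n} = o_{\mathbb{P}}(1)$ --- strictly speaking, more than the stated hypothesis ``\eqref{weak} holds.'' You instead clear the denominator and compare the linear statistic $R_N = \sqrt{N}(\hat D - D) - q_{1-\alpha}\sqrt{N}\hat V$ against the deterministic drift $c_N$; since $(x,y) \mapsto x - q_{1-\alpha}y$ is continuous, $R_N$ converges by \eqref{weak} and the continuous mapping theorem \emph{whether or not} $\zeta_j$ vanishes (weak convergence to the constant $(0,0)$ still yields tightness), so the non-boundary cases, including the degenerate case $D^{(j)} = 0$, $\zeta_j = 0$, are settled by one tightness argument that uses nothing beyond the theorem's hypothesis. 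This makes your proof self-contained where the paper's is not quite. Two small refinements: your appeal to ``smooth functionals of Brownian motion'' for atomlessness can be made precise by noting that $\mathbb{B}(1)$ is independent of the bridge $\{\mathbb{B}(\lambda) - \lambda\mathbb{B}(1)\}_{\lambda \in [0,1]}$, hence of $G$, so conditioning on $G = g > 0$ gives $\mathbb{P}(\mathbb{B}(1) = q_{1-\alpha}\, g) = 0$; and both your claim that $\{\hat V^{(j)}_{m,n} = 0\}$ is asymptotically negligible and the statement $G > 0$ a.s.\ require $\nu$ not to be concentrated at $\lambda = 1$ --- an assumption the paper also uses implicitly for $\mathbb{W}$ in \eqref{wvar} to be well defined.
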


\begin{proof}
If $D_{j} >0$, the continuous mapping theorem and \eqref{weak} imply
 \begin{eqnarray}
\label{thm2.1a}
 \frac{\hat D^{(j)}_{m,n} - D^{(j)}}{\hat V^{(j)}_{m,n}}
   \stackrel{\mathcal{D}}{\longrightarrow}  \mathbb{W}~,
  \end{eqnarray}
  where the random variable $ \mathbb{W}$ is defined in \eqref{wvar}. Consequently,
  the probability of rejecting the null hypothesis is given by
 \begin{eqnarray}
 \label{power}
\mathbb{P} (\hat{\mathbb{W}}^{(j)}_{m,n}  > q_{1 - \alpha} )
= \mathbb{P} \bigg (  \frac{\hat D^{(j)}_{m,n} - D^{(j)}}{\hat V^{(j)}_{m,n}}
 > \frac {\Delta_j - D^{(j)}}{\hat V^{(j)} _{m,n}}+ q_{1 - \alpha} \bigg).
  \end{eqnarray}
It follows moreover from \eqref{weak} that
$\hat V^{(j)}_{m,n} \stackrel {\mathbb{P}}{\rightarrow} 0$ as $m,n \to \infty$ and therefore \eqref{thm2.1a} implies \eqref{test-bev}, 
{
thus completing the proof in the case $D_{j} >0$. If $D_{j} = 0$  it follows from the proof of  \eqref{weak}
(see Proposition \ref{d-approx-1} below) that $\sqrt{m+n} \hat D_{m,n}^{(j)} = o_{ \mathbb{P} }(1)$ and
 $\sqrt{m+n} \hat V_{m,n}^{(j)} = o_{ \mathbb{P} }(1)$. Consequently,
 $$
 \mathbb{P} (\hat{\mathbb{W}}^{(j)}_{m,n}  > q_{1 - \alpha} ) = \mathbb{P} \big  ( \sqrt{m+n} \hat D_{m,n}^{(j)}   > \sqrt{m+n} \Delta_{j} + \sqrt{m+n} \hat V_{m,n}^{(j)} q_{1-\alpha}  \big ) = o(1),
$$
which completes the proof.
}
\end{proof}

The main difficulty in the proof of Theorem \ref{thm1}
is hidden by postulating the weak convergence in \eqref{weak}. A proof of this statement is technically demanding. The precise formulation is given in the following section.

\begin{rem}[Estimation of the long-run variance, power, and  relevant differences in the eigenvalues]\label{eig-rem}
{\rm ~\\
(1)  The parameter $\zeta_j^{2}$ is essentially a long-run variance parameter. Therefore  it is worthwhile to mention that  on a first glance the weak convergence  in \eqref{weak}  provides a very simple test for the hypotheses  \eqref{2.21:func} if a consistent estimator, say $\hat \zeta_{n,j}^{2}$, of
 the  long-run variance would be   available. To this end, note that in this case it follows from \eqref{weak}  that
$ \sqrt{m+n} ( {\hat D^{(j)}_{m,n}- D^{(j)}} ) / \hat \zeta_{n,j  }$ converges weakly to a standard normal distribution. Consequently, using the same arguments as in the proof
of Theorem \ref{thm1},we obtain  that  rejecting the null hypothesis in \eqref{2.21:func}, whenever
\begin{equation}\label{testlrv}
 \sqrt{m+n} ( \hat D^{(j)}_{m,n}-  \Delta_{j} ) / \hat \zeta_{n,j  } >  u_{1 - \alpha}~,
  \end{equation}
  yields a consistent and asymptotic level $\alpha$ test. However a careful inspection of the representation of the long-run variance
  in equations \eqref{2.15a}--\eqref{tau-def-app} in Section \ref{sec:proofs} suggests that it would be extremely difficult, if not impossible, to construct a reliable estimate of  the parameter $\zeta_j$ in this context, due to its complicated dependence on the covariance operators $C^X$, $C^Y$, and their full complement of eigenvalues and eigenfunctions.
\\
(2)
 Defining  $\mathbb{K}=\big (\int_{0}^{1} \lambda^2(\mathbb{B}(\lambda)-\lambda \mathbb{B}(1))^2 \nu(d\lambda)\big)^{1/2}$, it follows from
\eqref{power} that
\begin{equation}\label{power-w}
P\big ( \hat{\mathbb{W}}^{(j)}_{m,n}  > q_{1 - \alpha} \big ) \approx P\Big ({\mathbb{W}} > \frac{\sqrt{m+n}(\Delta_j - D_j)}{\zeta_j  \cdot \mathbb{K} } +   q_{1 - \alpha}\Big ),
\end{equation}
where the random variable $\mathbb{W} $ is defined in \eqref{wvar} and
$\zeta_j$ is the long-run standard deviation appearing in Theorem \ref{thm2.1}, which is defined precisely in  \eqref{tau-def-app}. The probability  on the  right-hand side converges to zero, $\alpha$, or 1, depending on $\Delta_j-D_j$ being negative, zero, or positive, respectively. From this one may  also quite easily understand how the power of the test depends on $\zeta_j$.   Under the alternative,  $\Delta_j - D_j < 0 $
and the probability on the right-hand side of  \eqref{power-w} increases if $(D_j-\Delta_j)/\zeta_j$ increases. Consequently, smaller long-run variances $\zeta_j^2$ yield more powerful tests. Some values of $\zeta_j$ are calculated via simulation for some of the examples in Section \ref{sec-simul} below.
\\
(3)
Alongside the test for relevant differences in the eigenfunctions just developed, one might also consider the following test for relevant differences in the $j$th eigenvalues
of the covariance operators $C^{X}$ and $C^{Y}$:
\begin{equation}\label{2.21:eval}
    H^{(j)}_{0,val} : \;  D_{j,val} :=
    ( \tau^X_j - \tau^Y_j  )^2 \leq \Delta_{j,val} ~~\mbox{ versus}  ~~    H^{(j)}_{1,val} :   ( \tau^X_j - \tau^Y_j  )^2  > \Delta_{j,val}.
  \end{equation}
Following the development of the above test for the eigenfunctions, a test of the hypothesis \eqref{2.21:eval} can be constructed based on the partial sample estimates of the eigenvalues $\hat{\tau}_j^{X}(\lambda)$  and $\hat{\tau}_j^{Y}(\lambda)$ of the kernel integral operators with kernels $\hat{C}_m^X (\cdot, \cdot , \lambda) $ and $\hat{C}_n^Y (\cdot, \cdot , \lambda) $ in \eqref{1.5seq}.
In particular, let
\begin{eqnarray*}
\hat{T}_{m,n}^{(j)}(\lambda) &=& \lambda (\hat{\tau}_j^{X}(\lambda)-\hat{\tau}_j^{Y}(\lambda)), \mbox{ and } \\
\hat{M}_{m,n}^{(j)} &=&  \left(\int_{0}^{1}\{[\hat{T}_{m,n}^{(j)}(\lambda)]^2-\lambda^2[\hat{T}_{m,n}^{(j)}(\lambda)]^2\}^2 \nu(d\lambda)\right)^{1/2}.
\end{eqnarray*}
Then  one  can show, in fact somewhat more simply than in the case of the eigenfunctions, that the test procedure that rejects the null hypothesis whenever
\begin{equation}\label{testval}
\hat{\mathbb{Q}}^{(j)}_{m,n} = \frac{\hat{T}_{m,n}^{(j)}(1)- \Delta_{j,val}} {\hat{M}_{m,n}^{(j)}}  > q_{1-\alpha}
\end{equation}
is a consistent and  asymptotic  level $\alpha$ test for the hypotheses \eqref{2.21:eval}. Moreover, the power of this test is approximately given by
\begin{equation}\label{power-wq}
P\big ( \hat{\mathbb{Q}}^{(j)}_{m,n}  > q_{1 - \alpha} \big )  \approx P\Big ({\mathbb{W}} > \frac{\sqrt{m+n}(\Delta_{j,val} - D_{j,val})}{\zeta_{j,val}  \cdot \mathbb{K} } +   q_{1 - \alpha}\Big),
\end{equation}
where  $\zeta_{j,val}^{2}$  is  a different  long-run variance parameter.
Although the   tests \eqref{2.21:func}  and \eqref{2.21:eval}   are constructed for completely different testing problems it might be of interest to compare their  power properties. For this purpose note that the ratios $(D_j-\Delta_j)/\zeta_j$ and $(D_{j,val}-\Delta_{j,val})/\zeta_{j,val}$, for which the power of each test is an increasing function of, implicitly depend in a quite complicated way on the dependence structure of the $X$ and $Y$ samples and on all eigenvalues  and eigenfunctions of their  corresponding  covariance operators.

One might expect intuitively that relevant differences between the eigenvalues would be easier to detect than differences between the eigenfunctions (as the latter are more difficult to estimate). However, an empirical analysis shows that, in typical examples,  the  ratio $(D_{j,val}-\Delta_{j,val})/\zeta_{j,val}$ increases extremely slowly with increasing $D_{j,val}$ compared to the analogous ratio for the eigenfunction problem. Consequently,  we expected and observed in numerical experiments (not presented for the sake of brevity) that  the test  \eqref{testval}  would be   less powerful than the   test  \eqref{testone}
 if  in hypotheses  \eqref{2.21:eval}  and \eqref{2.21:func}   the  thresholds $\Delta_{j,val}$ and $\Delta_{j} $ are similar. This observation also applies to the tests based on (intractable) long-run variance estimation. Here the power is approximately given by
$ 1 - \Phi \big (  \sqrt{m+n} (\Delta_{j} -D ) / z + u_{1-\alpha} \big )  $, where $\Phi$ is the cdf of the standard normal distribution
and $z$  (and $D$) is either $\zeta_j$ (and $D_{j} $) for the test \eqref{testlrv}  or $ \zeta_{j,val}$ (and $D_{j,val}$) for the corresponding test regarding the eigenvalues.
}
\end{rem}

\subsection{Justification of  weak convergence}
\label{sec22}

For a proof of  \eqref{weak} several technical assumptions are required. The first condition is standard in two-sample inference.

\begin{assumption}\label{theta}
There exists a constant  $\theta \in (0,1)$ such that $\lim_{m,n \to \infty} {m}/({m+n}) = \theta$.
\end{assumption}

Next, we specify the dependence structure of the time series  $\{X_i\}_{i\in \mathbb{Z}}$ and $\{Y_i\}_{i\in \mathbb{Z}}$.
Several mathematical concepts have been proposed for this purpose \citep[see][among many others]{
bradley2005,bertail2006}.
In this paper, we use the general framework  of $L^{p}$-$m$-approximability for weakly dependent functional data as put forward in \cite{hoermann2010}. Following these authors,
a time series $\{X_i\}_{i \in \mathbb{Z}}$ in $L^2([0,1])$  is called {\it  $L^{p}$-$m$-approximable}  for some $p>0$ if
\begin{itemize}
\item[(a)]
There exists a measurable function $g\colon S^\infty\to L^2([0,1])$, where $S$ is a measurable space, and independent, identically distributed (iid) innovations $\{\epsilon_i\}_{i \in \mathbb{Z}}$ taking values in $S$ such that $X_i=g(\epsilon_i,\epsilon_{i-1},\ldots)$ for $i\in\mathbb{Z}$;
\item[(b)] Let $\{\epsilon_i^\prime\}_{i \in \mathbb{Z}}$ be an independent copy of $\{\epsilon_i\}_{i \in \mathbb{Z}}$, and define \\ $X_{i,m}=g(\epsilon_i,\ldots,\epsilon_{i-m+1},\epsilon^\prime_{i-m},\epsilon^\prime_{i-m-1},\ldots)$. Then,
\[
\sum_{m=0}^\infty\big(\mathbb{E}[\|X_i-X_{i,m}\|^p]\big)^{1/p}<\infty .
\]
\end{itemize}

\begin{assumption}\label{edep} The sequences $\{X_i\}_{i \in \mathbb{Z}}$ and  $\{Y_i\}_{i \in \mathbb{Z}}$ are independent,  each centered  and
$L^p$-m-approximable for some $p>4$.
\end{assumption}

Under Assumption \ref{edep},  there exist  covariance operators $C^X$ and $C^{Y}$ of $X_i$ and $Y_i$. For the corresponding eigenvalues $\tau^X_1 \geq \tau^X_2 \geq \cdots$ and $\tau^Y_1 \geq \tau^Y_2 \geq \cdots$, we assume the following.

\begin{assumption} \label{as-spacing} There exists a positive integer $d$ such that $\tau_1^X > \cdots > \tau_d^X  > \tau_{d+1}^X >0$  and $\tau_1^Y > \cdots > \tau_d^Y  > \tau_{d+1}^Y >0$.
\end{assumption}

The final assumption needed is a positivity condition on the long-run variance parameter $\zeta_j^{2}$ appearing in \eqref{weak}. The formal definition of $\zeta_j$ is quite cumbersome, since it depends in a complicated way on expansions for the differences $\hat{v}^X_j(\cdot,\lambda)-v^X_j$ and $\hat{v}^Y_j(\cdot,\lambda)-v^Y_j$, but is provided in Section \ref{sec:proofs}; see equations \eqref{2.15a}--\eqref{tau-def-app}.

\begin{assumption} \label{var-pos} The scalar $\zeta_j$ defined in \eqref{tau-def-app} is strictly positive, whenever  $D_{j} >0$.
\end{assumption}

Recall the definition of the sequential processes $\hat C^{X} (\cdot, \cdot, \lambda) $ and $\hat C^{Y} (\cdot, \cdot, \lambda) $  in \eqref{1.5seq} and their corresponding eigenfunctions $\hat v^X_j (\cdot, \lambda)$ and $\hat v^Y_j (\cdot, \lambda)$.
The first step in the proof of the weak convergence \eqref{weak} is a stochastic expansion  of the difference between the sample eigenfunctions $\hat v^{X}_{j} (\cdot, \lambda)$ and $\hat v^{Y}_{j} (\cdot, \lambda)$ and their respective population versions $v^X_j$ and $v^Y_j$. Similar expansions that do not take into account uniformity in the partial sample parameter $\lambda$ have been derived by \cite{kokoreim2013} and \cite{hallhoss2006}, among others; see also \cite{dauxois1982} for a general statement in this context. The proof of this result is postponed to Section \ref{appendix1}.

\begin{proposition}\label{z-approx} Suppose Assumptions \ref{edep} and \ref{as-spacing} hold, then,  for any   $j\le d$,
\begin{align} \label{2.1}
\sup_{\lambda \in [0,1]} \bigg\| \lambda [\hat v^X_j(t,\lambda) - v^X_j(t)] - \frac {1}{\sqrt{m}} \sum_{k \neq j}  \frac {v^X_k(t)}{\tau^X_j - \tau^X_k} \int^1_0 \hat Z^X_m (s_1, s_2, \lambda) v^X_k (s_2) & v^X_j(s_1) ds_1  ds_2 \bigg\| \\&=O_\mathbb{P}\left( \frac{\log^\kappa(m)}{m}\right), \notag
\end{align}
and
\begin{align} \label{2.1}
\sup_{\lambda \in [0,1]} \bigg\| \lambda [\hat v^Y_j(t,\lambda) - v^Y_j(t)] - \frac {1}{\sqrt{m}} \sum_{k \neq j}  \frac {v^Y_k(t)}{\tau^Y_j - \tau^Y_k} \int^1_0 \hat Z^Y_n (s_1, s_2, \lambda) v^Y_k (s_2) & v^Y_j(s_1) ds_1  ds_2 \bigg\| \\&=O_\mathbb{P}\left( \frac{\log^\kappa(n)}{n}\right), \notag
\end{align}
for some $\kappa > 0$, where  the processes
$\hat Z^X_m$ and $\hat Z^Y_n$ are defined by
\begin{eqnarray} \label{2.3}
 \hat Z^X_m (s_1, s_2, \lambda)   &=&  \frac {1}{\sqrt{m}} \sum^{\lfloor m \lambda \rfloor}_{i=1} \big (X_i(s_1) X_i(s_2) - C^X(s_1, s_2) \big ),  \\ \label{2.4}
     \hat Z^Y_n (s_1, s_2, \lambda)   &=&  \frac {1}{\sqrt{n}} \sum^{\lfloor n \lambda \rfloor}_{i=1} \big (Y_i(s_1) Y_i(s_2) - C^Y(s_1, s_2) \big ).
\end{eqnarray}
Moreover,
\begin{align}\label{v-approx-1x}
\sup_{\lambda \in [0,1]} \sqrt{\lambda} \big\| \hat{v}^X_j(\cdot,\lambda) - v^X_j\big\|
= O_\mathbb{P}\left(\frac{\log^{(1/\kappa)}(m)}{\sqrt{m}} \right), \\
\sup_{\lambda \in [0,1]} \sqrt{\lambda} \big\| \hat{v}^Y_j(\cdot,\lambda) - v^Y_j\big\| = O_\mathbb{P}\left (\frac{\log^{(1/\kappa)}(n)}{\sqrt{n}} \right).
\label{v-approx-1y}
\end{align}
\end{proposition}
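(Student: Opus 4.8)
The plan is to obtain \eqref{2.1} as a first-order perturbation expansion for the eigenfunctions of the sequential covariance estimators, rendered uniform in $\lambda$. Writing $f\otimes g$ for the operator with kernel $f(s_1)g(s_2)$ and setting $N=\lfloor m\lambda\rfloor$, the sequential estimator satisfies $\hat C^X_m(\cdot,\cdot,\lambda)-C^X = N^{-1}\sum_{i=1}^{N}(X_i\otimes X_i - C^X)=N^{-1}S^X_N$, where $S^X_N=\sum_{i=1}^{N}(X_i\otimes X_i-C^X)$ and $\hat Z^X_m(\cdot,\cdot,\lambda)=m^{-1/2}S^X_N$. Thus the perturbation $E_\lambda:=\hat C^X_m(\cdot,\cdot,\lambda)-C^X=m^{1/2}N^{-1}\hat Z^X_m(\cdot,\cdot,\lambda)$ is a centered, self-adjoint Hilbert--Schmidt operator. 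Under Assumption \ref{as-spacing} the eigenvalue $\tau^X_j$ is simple for $j\le d$, so the classical eigenfunction perturbation identity (in the form used by \cite{dauxois1982}, \cite{hallhoss2006} and \cite{kokoreim2013}) gives
\[
\hat v^X_j(\cdot,\lambda)-v^X_j=\sum_{k\neq j}\frac{\langle E_\lambda v^X_j,v^X_{k}\rangle}{\tau^X_j-\tau^X_{k}}\,v^X_{k}+R^X_j(\lambda),
\]
with a remainder obeying $\|R^X_j(\lambda)\|\lesssim \|E_\lambda\|_{\mathrm{op}}^2$ on the event that $\|E_\lambda\|_{\mathrm{op}}$ stays below a fixed fraction of the spectral gap $\min_{k\neq j}|\tau^X_j-\tau^X_{k}|$. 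Since $\hat Z^X_m$ is symmetric in its spatial arguments, $\langle E_\lambda v^X_j,v^X_{k}\rangle=m^{1/2}N^{-1}\int_0^1\!\int_0^1\hat Z^X_m(s_1,s_2,\lambda)v^X_{k}(s_2)v^X_j(s_1)\,ds_1ds_2$, and because $\lambda\,m^{1/2}N^{-1}=m^{-1/2}+O(m^{-1/2}N^{-1})$, multiplying the display by $\lambda$ reproduces exactly the leading term in \eqref{2.1}.

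The technical core is a weighted maximal inequality for the operator-valued partial-sum process $\{S^X_N\}$. Because $\{X_i\}$ is $L^p$-$m$-approximable with $p>4$ (Assumption \ref{edep}), the product sequence $\{X_i\otimes X_i\}$ is $L^2$-$m$-approximable with $\mathbb{E}\|X_i\otimes X_i\|_{\mathrm{HS}}^2=\mathbb{E}\|X_i\|^4<\infty$; this is exactly the regime in which the weak-dependence machinery of \cite{hoermann2010} yields both an invariance principle for $N^{-1/2}S^X_N$ and a H\'ajek--R\'enyi/law-of-iterated-logarithm-type control of the partial sums. Concretely, I would establish
\[
\sup_{1\le N\le m}\frac{\|S^X_N\|_{\mathrm{HS}}}{\sqrt N}=O_{\mathbb{P}}\big(\log^{a}(m)\big)
\]
for some $a>0$, equivalently $\sup_{\lambda\in(0,1]}\lambda^{-1/2}\|\hat Z^X_m(\cdot,\cdot,\lambda)\|_{\mathrm{HS}}=O_{\mathbb{P}}(\log^a(m))$, which is what controls the process uniformly down to $\lambda\to 0$; the logarithmic weight (rather than a constant) is the price paid for the supremum over all scales $N$ and is the origin of the $\log^\kappa$ factors in the statement.

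The two bounds \eqref{v-approx-1x}--\eqref{v-approx-1y} follow from this maximal inequality alone, without the full expansion. By the Davis--Kahan ($\sin\theta$) inequality and simplicity of $\tau^X_j$, one has $\|\hat v^X_j(\cdot,\lambda)-v^X_j\|\lesssim \|E_\lambda\|_{\mathrm{op}}\le N^{-1}\|S^X_N\|_{\mathrm{HS}}$, so, using $N\asymp m\lambda$,
\[
\sqrt\lambda\,\|\hat v^X_j(\cdot,\lambda)-v^X_j\|\lesssim \sqrt\lambda\,N^{-1}\|S^X_N\|_{\mathrm{HS}}\lesssim m^{-1/2}\,\frac{\|S^X_N\|_{\mathrm{HS}}}{\sqrt N}=O_{\mathbb{P}}\big(\log^{a}(m)/\sqrt m\big)
\]
uniformly in $\lambda$; choosing $\kappa$ with $1/\kappa\ge a$ gives the stated rate. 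The sign convention $\langle\hat v^X_j,v^X_j\rangle\ge 0$ fixes the orientation of $\hat v^X_j$ so that this is the genuine distance and not its reflection.

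To upgrade this to the sharper expansion \eqref{2.1} I would split the supremum at a threshold $\lambda_m\asymp\log^{b}(m)/m$ with $b\ge 2a$. For $\lambda\ge\lambda_m$ the partial sample $N$ is large enough that $\|E_\lambda\|_{\mathrm{op}}$ lies below the spectral-gap fraction with probability tending to one, the perturbation identity is valid, and the quadratic remainder obeys $\lambda\,\|R^X_j(\lambda)\|\lesssim\lambda N^{-2}\|S^X_N\|_{\mathrm{HS}}^2\asymp m^{-1}\,\|S^X_N\|_{\mathrm{HS}}^2/N=O_{\mathbb{P}}(\log^{2a}(m)/m)$ uniformly, while the $O(m^{-1/2}N^{-1})$ mismatch between $\lambda m^{1/2}N^{-1}$ and $m^{-1/2}$ contributes at most $O_{\mathbb{P}}(\log^{a}(m)/m)$. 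For $\lambda<\lambda_m$ both $\lambda[\hat v^X_j(\cdot,\lambda)-v^X_j]$ and the linear leading term are $O(\lambda_m)$ directly --- the former because $\|\hat v^X_j-v^X_j\|\le 2$, the latter because it is a bounded multiple of $m^{-1/2}\|\hat Z^X_m(\cdot,\cdot,\lambda)\|_{\mathrm{HS}}\le m^{-1/2}\sqrt{\lambda_m}\log^a(m)$ there. Taking $\kappa=b$ then yields the stated $O_{\mathbb{P}}(\log^\kappa(m)/m)$, and the $Y$-statements follow verbatim with $(X,m)$ replaced by $(Y,n)$. I expect the weighted maximal inequality of the second paragraph --- pinning down the correct logarithmic weight uniformly as $\lambda\to0$ under only $L^p$-$m$-approximability --- to be the main obstacle; once it is in hand, the perturbation algebra and the two-regime split are routine.
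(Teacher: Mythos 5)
Your proposal follows what is in substance the paper's own route: both arguments rest on (i) the first--order perturbation expansion of $\hat v^X_j(\cdot,\lambda)-v^X_j$ with a quadratic remainder, (ii) the same key technical lemma, namely the weighted maximal inequality $\sup_{1\le N\le m}N^{-1/2}\big\|\sum_{i=1}^N (X_i\otimes X_i-C^X)\big\| = O_\mathbb{P}(\log^{(1/\kappa)}(m))$ (the paper obtains this by generalizing Lemma B.1 of Aue, Rice and S\"onmez (2018b) to $L^{2+\kappa}$-$m$-approximable sequences in $L^2([0,1]^2)$), and (iii) a spectral-gap (Davis--Kahan type) bound, in the paper Lemma 2.2 of Horv\'ath and Kokoszka (2012), to get \eqref{v-approx-1x}--\eqref{v-approx-1y}. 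The one organizational difference is how small $\lambda$ is handled: you split the supremum at $\lambda_m\asymp \log^b(m)/m$ and invoke the perturbation identity only on a high-probability event where $\|E_\lambda\|_{\mathrm{op}}$ is below a gap fraction, whereas the paper never splits regimes --- it expands $\hat v^X_j(\cdot,\lambda)-v^X_j$ exactly in the eigenbasis $\{v^X_k\}$, solves for the coefficients $\xi_{k,\lambda}$ from the eigen-equation, and bounds the resulting remainder $G_{j,m}$ only after multiplication by $\lambda$ (respectively $\sqrt\lambda$), so the weights tame the small-$\lambda$ behavior automatically and the identity holds for all $\lambda$ by convention. Two small points need fixing in your write-up, though neither is a structural gap. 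First, $L^2$-$m$-approximability of $X_i\otimes X_i$ is not enough for the log-power maximal inequality; one needs $L^{2+\kappa}$-$m$-approximability for some $\kappa>0$, which is exactly what $p>4$ in Assumption \ref{edep} delivers via Cauchy--Schwarz (and this $\kappa$ is the source of the exponents in the statement). Second, with $b=2a$ your bound gives only $\|E_\lambda\|_{\mathrm{op}}=O_\mathbb{P}(1)$ on $\{\lambda\ge \lambda_m\}$, not $o_\mathbb{P}(1)$, so the event ``$\|E_\lambda\|_{\mathrm{op}}$ below a fixed gap fraction'' need not have probability tending to one at the boundary; take $b>2a$ strictly.
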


\medskip

Recalling notation \eqref{2.5}, Proposition~\ref{z-approx} motivates the approximation
\begin{equation}\label{2.9}
 \hat D_{m,n}^{(j)}(t,\lambda) - \lambda D_j(t)  = \lambda ( \hat v^X_j (t) -  \hat v^Y_j (t) )  - \lambda (v^X_j (t) - v^Y_j (t) )  \approx
  \tilde D_{m,n}^{(j)}(t, \lambda),
\end{equation}
where the process  $  \tilde D_{m,n}^{(j)}$  is defined by
\begin{align}
\nonumber
\tilde D^{(j)}_{m,n} (t, \lambda)
=& \frac {1}{\sqrt{m}} \sum_{k \neq j} \frac {v^X_k (t)}{\tau^X_j - \tau^X_k} \int^1_0 \hat Z^X_m (s_1,s_2,\lambda) v^X_k (s_2) v^X_j (s_1) ds_1 ds\\ \label{2.8}
&- \frac {1}{\sqrt{n}} \sum_{k \neq j} \frac {v^Y_k (t)}{\tau^Y_j - \tau^Y_k} \int^1_0 \hat Z^Y_n (s_1, s_2, \lambda) v^Y_k(s_2) v^Y_j(s_1) ds_1, ds_2.
\end{align}
The next result makes the foregoing heuristic arguments  rigorous and shows that the approximation holds in fact uniformly with respect to $\lambda \in [0,1]$.

\begin{proposition}\label{d-approx-1}
Suppose Assumptions  \ref{theta}--\ref{var-pos} 
hold, then, for any  for $j \le d$,
\begin{eqnarray*}
\sup_{\lambda \in [0,1]} \left\|  \hat D_{m,n}^{(j)}(\cdot,\lambda) - \lambda D_j(\cdot) - \tilde D_{m,n}^{(j)}(\cdot, \lambda)   \right\| & =& o_\mathbb{P}\left(\frac{1}{\sqrt{m+n}}\right),
\\
\sup_{\lambda \in [0,1]} \left| \left\|  \hat D_{m,n}^{(j)}(\cdot,\lambda)-\lambda D_j(\cdot) \right\|^2 -  \left\| \tilde D_{m,n}^{(j)}(\cdot, \lambda)   \right\|^2 \right| &= & o_\mathbb{P}\left(\frac{1}{\sqrt{m+n}}\right),
\end{eqnarray*}
and
\begin{equation}
\label{2.11}
\sqrt{m+n} \sup_{\lambda \in [0,1]} \int^1_0 (\tilde D^{(j)}_{m,n} (t, \lambda))^2 dt  =  o_{\mathbb{P}}(1).
\end{equation}
\end{proposition}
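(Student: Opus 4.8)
The plan is to reduce all three displays to Proposition~\ref{z-approx} together with a single uniform-in-$\lambda$ bound on the sequential partial-sum processes $\hat Z^X_m$ and $\hat Z^Y_n$; once that bound is in hand, the remaining content is elementary Hilbert-space algebra and a comparison of rates. First I would prove the first display. Since
\[
\hat D^{(j)}_{m,n}(t,\lambda)-\lambda D_j(t)=\lambda\big[\hat v^X_j(t,\lambda)-v^X_j(t)\big]-\lambda\big[\hat v^Y_j(t,\lambda)-v^Y_j(t)\big],
\]
subtracting the two expansions of Proposition~\ref{z-approx} gives directly
$\sup_{\lambda}\big\|\hat D^{(j)}_{m,n}(\cdot,\lambda)-\lambda D_j(\cdot)-\tilde D^{(j)}_{m,n}(\cdot,\lambda)\big\|=O_\mathbb{P}(\log^\kappa(m)/m)+O_\mathbb{P}(\log^\kappa(n)/n)$. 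It then suffices to note, using Assumption~\ref{theta} (so that $m,n\asymp m+n$), that $\sqrt{m+n}\,\log^\kappa(m)/m\to 0$ because $\log^\kappa$ grows slower than any positive power of its argument; hence the remainder is $o_\mathbb{P}(1/\sqrt{m+n})$.

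Next I would control $\tilde D^{(j)}_{m,n}$ itself. Expanding in the orthonormal bases $\{v^X_k\}$ and $\{v^Y_k\}$ and applying Parseval, the coefficient $\langle\hat Z^X_m(\cdot,\cdot,\lambda)v^X_j,v^X_k\rangle$ is the $k$th Fourier coefficient of $\hat Z^X_m(\cdot,\cdot,\lambda)v^X_j$, so that
\[
\Big\|\tfrac{1}{\sqrt m}\sum_{k\neq j}\tfrac{v^X_k}{\tau^X_j-\tau^X_k}\langle\hat Z^X_m(\cdot,\cdot,\lambda)v^X_j,v^X_k\rangle\Big\|^2=\tfrac{1}{m}\sum_{k\neq j}\tfrac{\langle\hat Z^X_m(\cdot,\cdot,\lambda)v^X_j,v^X_k\rangle^2}{(\tau^X_j-\tau^X_k)^2}\le\tfrac{1}{m(\delta^X_j)^2}\big\|\hat Z^X_m(\cdot,\cdot,\lambda)\big\|_{\mathrm{HS}}^2,
\]
where $\delta^X_j=\min_{k\neq j}|\tau^X_j-\tau^X_k|>0$. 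The spacing Assumption~\ref{as-spacing} is what guarantees $\delta^X_j>0$: for $j\le d$ the gaps among the strictly ordered $\tau^X_1>\cdots>\tau^X_{d+1}$ are positive, and every $\tau^X_k$ with $k>d+1$ lies below $\tau^X_{d+1}<\tau^X_j$. The crucial input is the uniform bound $\sup_{\lambda\in[0,1]}\|\hat Z^X_m(\cdot,\cdot,\lambda)\|_{\mathrm{HS}}=O_\mathbb{P}(1)$ (and its $Y$-analogue), which I would obtain from an invariance principle for the weakly dependent Hilbert--Schmidt-valued summands $X_i\otimes X_i-C^X$; Assumption~\ref{edep} with $p>4$ supplies both the finite fourth moment $\mathbb{E}\|X_i\|^4<\infty$ and the $L^p$-$m$-approximability needed for tightness of the partial-sum process. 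Combining the above yields $\sup_\lambda\|\tilde D^{(j)}_{m,n}(\cdot,\lambda)\|=O_\mathbb{P}(1/\sqrt{m+n})$, and the third display is then immediate since $\sqrt{m+n}\,\sup_\lambda\|\tilde D^{(j)}_{m,n}(\cdot,\lambda)\|^2=O_\mathbb{P}(1/\sqrt{m+n})=o_\mathbb{P}(1)$.

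The second display follows by polarization. Writing $A_\lambda=\hat D^{(j)}_{m,n}(\cdot,\lambda)-\lambda D_j(\cdot)$ and $B_\lambda=\tilde D^{(j)}_{m,n}(\cdot,\lambda)$, one has $\|A_\lambda\|^2-\|B_\lambda\|^2=2\langle B_\lambda,A_\lambda-B_\lambda\rangle+\|A_\lambda-B_\lambda\|^2$, so Cauchy--Schwarz gives $|\,\|A_\lambda\|^2-\|B_\lambda\|^2\,|\le 2\|B_\lambda\|\,\|A_\lambda-B_\lambda\|+\|A_\lambda-B_\lambda\|^2$. Taking suprema and using the first display ($\sup_\lambda\|A_\lambda-B_\lambda\|=o_\mathbb{P}(1/\sqrt{m+n})$) together with $\sup_\lambda\|B_\lambda\|=O_\mathbb{P}(1/\sqrt{m+n})$ yields $\sup_\lambda|\,\|A_\lambda\|^2-\|B_\lambda\|^2\,|=o_\mathbb{P}(1/(m+n))=o_\mathbb{P}(1/\sqrt{m+n})$, as claimed.

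The main obstacle is precisely the uniform bound $\sup_\lambda\|\hat Z^X_m(\cdot,\cdot,\lambda)\|_{\mathrm{HS}}=O_\mathbb{P}(1)$: everything else is either a direct consequence of Proposition~\ref{z-approx} or routine Hilbert-space manipulation. Establishing it amounts to proving tightness of the sequential (partial-sum) process of the dependent, Hilbert--Schmidt-valued array, and this is exactly where the strengthened moment condition $p>4$ and the $m$-approximability structure of Assumption~\ref{edep} enter. Some care is also needed near $\lambda=0$, where $\lfloor m\lambda\rfloor$ is small; however, the normalization by $1/\sqrt m$ rather than $1/\sqrt{\lfloor m\lambda\rfloor}$ forces $\hat Z^X_m(\cdot,\cdot,\lambda)$ to vanish there and keeps the supremum well behaved.
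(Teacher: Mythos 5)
Your argument is correct, but it takes a genuinely different route from the paper for the second and third displays. The first display is handled identically: subtract the two expansions of Proposition \ref{z-approx}, apply the triangle inequality, and note that $\sqrt{m+n}\,\log^\kappa(m)/m\to 0$ under Assumption \ref{theta}. After that the paths diverge. The paper never bounds $\tilde D^{(j)}_{m,n}$ through the sequential process $\hat Z^X_m$ at all: it gets the second display from the first via the reverse triangle inequality (the factor $\|A_\lambda\|+\|B_\lambda\|$ is $O_\mathbb{P}(1)$ because eigenfunctions have unit norm), and then obtains \eqref{2.11} by running the second display in reverse --- replacing $\sup_\lambda\|\tilde D^{(j)}_{m,n}(\cdot,\lambda)\|^2$ by $\sup_\lambda\|\hat D^{(j)}_{m,n}(\cdot,\lambda)-\lambda D_j(\cdot)\|^2$ up to $o_\mathbb{P}(1/\sqrt{m+n})$ --- and bounding the latter by the uniform consistency rates \eqref{v-approx-1x} and \eqref{v-approx-1y}, which gives $O_\mathbb{P}(\log^{(2/\kappa)}(m)/\sqrt{m})=o_\mathbb{P}(1)$. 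You instead bound $\sup_\lambda\|\tilde D^{(j)}_{m,n}(\cdot,\lambda)\|$ directly via Parseval and the spectral gap (your verification that $\delta^X_j>0$ for $j\le d$ under Assumption \ref{as-spacing} is exactly right and mirrors the paper's constant $S_{j,X}$), reducing everything to $\sup_{\lambda\in[0,1]}\|\hat Z^X_m(\cdot,\cdot,\lambda)\|_{\mathrm{HS}}=O_\mathbb{P}(1)$, and then recover the second display by polarization. What your route buys is a sharper quantitative conclusion, $\sup_\lambda\|\tilde D^{(j)}_{m,n}(\cdot,\lambda)\|=O_\mathbb{P}(1/\sqrt{m+n})$, and a structure that anticipates the proof of Proposition \ref{prop1}; what it costs is the appeal to a functional invariance principle for the Hilbert--Schmidt-valued sequence $X_i\otimes X_i-C^X$, which you identify as the ``main obstacle'' but only sketch. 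Two observations close that gap cheaply: first, the required $L^{2+\kappa}$-$m$-approximability of this sequence in $L^2([0,1]^2)$ is verified inside the paper's own proof of Proposition \ref{z-approx}; second, you do not actually need the $O_\mathbb{P}(1)$ bound, since the weaker estimate $\sup_\lambda\|\hat Z^X_m(\cdot,\cdot,\lambda)\|=O_\mathbb{P}(\log^{(1/\kappa)}(m))$, which already follows from the partial-sum bound used there (the generalization of Lemma B.1 of Aue, Rice and S\"onmez), still yields $\sqrt{m+n}\,\sup_\lambda\|\tilde D^{(j)}_{m,n}(\cdot,\lambda)\|^2=O_\mathbb{P}(\log^{(2/\kappa)}(m)/\sqrt{m})=o_\mathbb{P}(1)$ and, through your polarization step, the second display as well. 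With that substitution your proof is complete using only machinery the paper already deploys.
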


\begin{proof}
According to their definitions,
\begin{align*}
&\hat D_{m,n}^{(j)}(t,\lambda) - \lambda D_j(t) -  \tilde D_{m,n}^{(j)}(t, \lambda)  \\
&=  \lambda [\hat v^X_j(t,\lambda) - v^X_j(t)] - \frac {1}{\sqrt{m}} \sum_{k \neq j}  \frac {v^X_k(t)}{\tau^X_j - \tau^X_k} \int^1_0 \hat Z^X_m (s_1, s_2, \lambda) v^X_k (s_2)  v^X_j(s_1) ds_1  ds_2 \\
&\;\;\;\;\;\;+ \lambda [\hat v^Y_j(t,\lambda) - v^Y_j(t)] - \frac {1}{\sqrt{m}} \sum_{k \neq j}  \frac {v^Y_k(t)}{\tau^Y_j - \tau^Y_k} \int^1_0 \hat Z^Y_n (s_1, s_2, \lambda) v^Y_k (s_2)  v^Y_j(s_1) ds_1  ds_2.
\end{align*}
Therefore, by the triangle inequality, Proposition \ref{z-approx}, and Assumption \ref{theta},
\begin{align*}
  \sup_{\lambda \in [0,1]} \Big\|  & \hat D_{m,n}^{(j)}(\cdot,\lambda) - \lambda D_j(\cdot) - \tilde D_{m,n}^{(j)}(\cdot, \lambda)   \Big\|  \\
   &\le \sup_{\lambda \in [0,1]} \Big\| \lambda [\hat v^X_j(t,\lambda) - v^X_j(t)] - \frac {1}{\sqrt{m}} \sum_{k \neq j}  \frac {v^X_k(t)}{\tau^X_j - \tau^X_k} \int^1_0 \hat Z^X_m (s_1, s_2, \lambda) v^X_k (s_2) v^X_j(s_1) ds_1  ds_2 \Big\| \\ \notag
&\;\;\;\;+ \sup_{\lambda \in [0,1]} \Big\| \lambda [\hat v^Y_j(t,\lambda) - v^Y_j(t)] - \frac {1}{\sqrt{m}} \sum_{k \neq j}  \frac {v^Y_k(t)}{\tau^Y_j - \tau^Y_k} \int^1_0 \hat Z^Y_n (s_1, s_2, \lambda) v^Y_k (s_2) v^Y_j(s_1) ds_1  ds_2 \Big\| \\
&=O_\mathbb{P}\left( \frac{\log^\kappa(m  )}{m}\right) = o_\mathbb{P}\left(\frac{1}{\sqrt{m+n}}\right).
\end{align*}
The second assertion follows immediately from the first and the reverse triangle inequality. With the second assertion in place, we have, using \eqref{v-approx-1x} and \eqref{v-approx-1y}, that
\begin{align}
\sqrt{m+n} \sup_{\lambda \in [0,1]}  \int^1_0 (\tilde D^{(j)}_{m,n} (t, \lambda))^2 dt &= \sqrt{m+n}  \sup_{\lambda \in [0,1]} \int^1_0 (\hat{D}_{m,n}^{(j)}(t,\lambda) - \lambda D_j(t))^2dt+ o_{\mathbb{P}}(1) \notag \\
&\le 4 \sqrt{m+n} \Big[ \sup_{\lambda \in [0,1]} \lambda^2 \| \hat{v}^X_j(\cdot,\lambda) - v^X_j\|^2 + \sup_{\lambda \in [0,1]} \lambda^2 \| \hat{v}^Y_j(\cdot,\lambda) - v^Y_j\|^2 \Big] \notag \\
& = O_\mathbb{P}\Big(\frac{\log^{(2/\kappa)}(m)}{\sqrt{m}} \Big) = o_\mathbb{P}(1)  \notag
\end{align}
which completes the proof.
\end{proof}

Introduce the process
\begin{equation} \label{znhat}
 \hat Z^{(j)}_{m,n} (\lambda) = \sqrt{m+n} \int^1_0 ( (\hat D^{(j)}_{m,n} (t,\lambda))^2 - \lambda^2 D^2_j(t))dt
 \end{equation}
 to obtain the  following result. The proof is somewhat complicated and therefore deferred to  Section  \ref{appendix2}.

\begin{proposition} \label{prop1}
 Let    $\hat Z^{(j)}_{m,n} $ be defined by  \eqref{znhat}, then,  under  Assumptions  \ref{theta}-\ref{var-pos} we have for any   $j \le d$,
  $$
  \{ \hat Z^{(j)}_{m,n} (\lambda) \}_{\lambda \in [0,1]} \rightsquigarrow \{ \lambda \zeta_j \mathbb{B} (\lambda) \}_{\lambda \in [0,1]},
  $$
  where $\zeta_j$ is a positive constant,  $\{ \mathbb{B}(\lambda)\}_{\lambda \in [0,1]}$ is a Brownian motion and $\rightsquigarrow$ denotes weak convergence in Skorokhod topology on $D[0,1]$.
\end{proposition}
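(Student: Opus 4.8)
The plan is to linearize the quadratic functional $\hat Z^{(j)}_{m,n}$ around its population counterpart, discard the higher-order remainder, and then recognize the leading term as a deterministic time-change of a partial-sum process to which an invariance principle applies. Writing $R_{m,n}(t,\lambda) := \hat D^{(j)}_{m,n}(t,\lambda) - \lambda D_j(t)$ and expanding the square, I would use
\[
(\hat D^{(j)}_{m,n}(t,\lambda))^2 - \lambda^2 D_j^2(t) = 2\lambda D_j(t)\,R_{m,n}(t,\lambda) + R_{m,n}^2(t,\lambda).
\]
Integrating over $t$ and multiplying by $\sqrt{m+n}$ splits $\hat Z^{(j)}_{m,n}(\lambda)$ into a linear part $2\lambda\sqrt{m+n}\int_0^1 D_j(t)R_{m,n}(t,\lambda)\,dt$ and a quadratic remainder $\sqrt{m+n}\int_0^1 R_{m,n}^2(t,\lambda)\,dt$. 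The second assertion of Proposition~\ref{d-approx-1} combined with \eqref{2.11} shows that $\sqrt{m+n}\sup_\lambda\|R_{m,n}(\cdot,\lambda)\|^2 = o_{\mathbb{P}}(1)$, so the quadratic remainder is negligible uniformly in $\lambda$ and can be dropped.

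For the linear part I would replace $R_{m,n}$ by $\tilde D^{(j)}_{m,n}$. By Cauchy--Schwarz, $\big|\int_0^1 D_j(t)(R_{m,n}(t,\lambda) - \tilde D^{(j)}_{m,n}(t,\lambda))\,dt\big| \le \|D_j\|\,\|R_{m,n}(\cdot,\lambda) - \tilde D^{(j)}_{m,n}(\cdot,\lambda)\|$, which the first assertion of Proposition~\ref{d-approx-1} renders $o_{\mathbb{P}}(1/\sqrt{m+n})$ uniformly in $\lambda$. Hence, uniformly in $\lambda$,
\[
\hat Z^{(j)}_{m,n}(\lambda) = 2\lambda\sqrt{m+n}\int_0^1 D_j(t)\,\tilde D^{(j)}_{m,n}(t,\lambda)\,dt + o_{\mathbb{P}}(1).
\]
Substituting the definition of $\tilde D^{(j)}_{m,n}$ and noting that $\langle C^X v^X_k, v^X_j\rangle = \tau^X_k\langle v^X_k, v^X_j\rangle = 0$ for $k\neq j$ (so the centering in $\hat Z^X_m$ is immaterial after projection), I would rewrite the integral as a difference of normalized partial sums,
\[
\int_0^1 D_j(t)\,\tilde D^{(j)}_{m,n}(t,\lambda)\,dt = \frac{1}{m}\sum_{i=1}^{\lfloor m\lambda\rfloor}\xi^X_i - \frac{1}{n}\sum_{i=1}^{\lfloor n\lambda\rfloor}\xi^Y_i,
\]
where $\xi^X_i = \langle X_i, v^X_j\rangle\sum_{k\neq j}\frac{\langle D_j, v^X_k\rangle}{\tau^X_j - \tau^X_k}\langle X_i, v^X_k\rangle$ and $\xi^Y_i$ is the analogous scalar. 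These are centered, and by Assumption~\ref{as-spacing} the gaps $\tau^X_j - \tau^X_k$ are bounded away from zero for $k>d$ while the $\langle D_j, v^X_k\rangle$ are square summable, so the defining series converges; since $p>4$ supplies finite fourth moments of $X_i$, each $\xi^X_i$ has finite variance.

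The final step is a functional central limit theorem. I would show that $(\xi^X_i)$ and $(\xi^Y_i)$, being fixed measurable transformations of the underlying innovations, inherit $L^2$-$m$-approximability from $(X_i)$ and $(Y_i)$, so the invariance principle for weakly dependent scalar sequences gives $\frac{1}{\sqrt m}\sum_{i=1}^{\lfloor m\lambda\rfloor}\xi^X_i \rightsquigarrow \sigma_X\mathbb{B}^X(\lambda)$ and likewise for $Y$, with $\sigma_X^2,\sigma_Y^2$ the respective long-run variances. Independence of the two samples makes $\mathbb{B}^X$ and $\mathbb{B}^Y$ independent; combining with Assumption~\ref{theta} (so $\sqrt{(m+n)/m}\to\theta^{-1/2}$ and $\sqrt{(m+n)/n}\to(1-\theta)^{-1/2}$) and the continuity of multiplication by the continuous map $\lambda$ on $D[0,1]$, the displayed representation converges to $2\lambda\big[\theta^{-1/2}\sigma_X\mathbb{B}^X(\lambda) - (1-\theta)^{-1/2}\sigma_Y\mathbb{B}^Y(\lambda)\big]$. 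The bracketed process has stationary independent Gaussian increments and continuous paths, hence equals $\zeta\,\mathbb{B}(\lambda)$ in distribution for a standard Brownian motion $\mathbb{B}$ with $\zeta^2 = \sigma_X^2/\theta + \sigma_Y^2/(1-\theta)$, and setting $\zeta_j = 2\zeta$ identifies the limit as $\lambda\zeta_j\mathbb{B}(\lambda)$, with $\zeta_j>0$ under Assumption~\ref{var-pos} when $D_j>0$. The main obstacle is precisely this last step: verifying that the transformed scalar sequences $\xi^X_i,\xi^Y_i$ genuinely satisfy the hypotheses of the invariance principle---the convergence and integrability of the series defining them (where Assumption~\ref{as-spacing} and the moment index $p>4$ are essential) and the tightness needed for weak convergence of the partial-sum processes in the Skorokhod topology.
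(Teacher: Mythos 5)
Your proposal is correct and follows essentially the same route as the paper's own proof: the same expansion of the square with the quadratic remainder removed via Proposition \ref{d-approx-1} and \eqref{2.11}, the same reduction by eigenfunction orthogonality to scalar partial sums (your $\xi^X_i$, $\xi^Y_i$ coincide, up to sign conventions, with the paper's $\overline{X}_i^{(j)}$, $\overline{Y}_i^{(j)}$ built from the kernels $f^X_j$, $f^Y_j$), and the same concluding step of a scalar invariance principle for approximable sequences, yielding the identical $\zeta_j$. The only substantive part you defer---verifying the hypotheses of that invariance principle---is exactly what the paper carries out, by showing the scalar sequences are $L^{p/2}$-$m$-approximable with $p/2>2$ (via Minkowski and Cauchy--Schwarz) and invoking Theorem 3 of \cite{wu:2005}.
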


\begin{theorem}\label{thm2.1}
If Assumptions \ref{theta},  \ref{edep}   and \ref{as-spacing} are satisfied, then for any  $j\leq d$
  \begin{eqnarray}
    \nonumber
\sqrt{m+n}  \big ( {\hat D^{(j)}_{m,n} - D^{(j)}},  {\hat V^{(j)}_{m,n}}  \big )
      \rightsquigarrow  \Big ( \zeta_j \mathbb{B}(1) , {  \Big \{ \zeta_j^2 \int^1_0 \lambda^2 (\mathbb{B}(\lambda) - \lambda \mathbb{B}(1))^{2} \nu (d \lambda ) \Big  \}^{1/2}} \Big ),
  \end{eqnarray}
  where $\hat D^{(j)}_{m,n}$ and $ \hat V^{(j)}_{m,n} $ are defined by  \eqref{2.6} and \eqref{2.7}, respectively, and
  $\{ \mathbb{B}(\lambda)\}_{\lambda \in [0,1]}$ is a Brownian motion.
\end{theorem}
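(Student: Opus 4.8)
The plan is to obtain both coordinates of the joint limit from the single functional convergence established in Proposition \ref{prop1}, by first rewriting the two statistics as images of the process $\hat Z^{(j)}_{m,n}(\cdot)$ from \eqref{znhat} under a fixed continuous functional, and then invoking the continuous mapping theorem. First I would note that, by \eqref{2.6} and \eqref{dj},
\[
\sqrt{m+n}\,\big(\hat D^{(j)}_{m,n} - D^{(j)}\big) = \sqrt{m+n}\int_0^1\big((\hat D^{(j)}_{m,n}(t,1))^2 - D_j^2(t)\big)\,dt = \hat Z^{(j)}_{m,n}(1),
\]
so the first coordinate is precisely $\hat Z^{(j)}_{m,n}$ evaluated at the endpoint $\lambda=1$.

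Next I would express the self-normalizer \eqref{2.7} through the same process. From the definition \eqref{znhat} one has $\int_0^1(\hat D^{(j)}_{m,n}(t,\lambda))^2\,dt = (m+n)^{-1/2}\hat Z^{(j)}_{m,n}(\lambda) + \lambda^2 D^{(j)}$, and likewise at $\lambda = 1$. Substituting these into the bracket of \eqref{2.7}, the two $\lambda^2 D^{(j)}$ terms cancel, leaving
\[
\sqrt{m+n}\,\hat V^{(j)}_{m,n} = \Big(\int_0^1\big(\hat Z^{(j)}_{m,n}(\lambda) - \lambda^2\hat Z^{(j)}_{m,n}(1)\big)^2\,\nu(d\lambda)\Big)^{1/2}.
\]
Hence the whole left-hand side of the theorem equals $\Phi\big(\hat Z^{(j)}_{m,n}\big)$, where $\Phi\colon D[0,1]\to\mathbb{R}^2$ is the fixed map $\Phi(f)=\big(f(1),\,(\int_0^1(f(\lambda)-\lambda^2 f(1))^2\,\nu(d\lambda))^{1/2}\big)$.

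I would then apply the continuous mapping theorem to $\hat Z^{(j)}_{m,n}\rightsquigarrow \lambda\zeta_j\mathbb{B}(\lambda)$ of Proposition \ref{prop1}. Writing $g(\lambda)=\lambda\zeta_j\mathbb{B}(\lambda)$ for the limit path, the first coordinate of $\Phi(g)$ is $g(1)=\zeta_j\mathbb{B}(1)$, and since $g(\lambda)-\lambda^2 g(1)=\lambda\zeta_j(\mathbb{B}(\lambda)-\lambda\mathbb{B}(1))$, the second coordinate becomes $\{\zeta_j^2\int_0^1\lambda^2(\mathbb{B}(\lambda)-\lambda\mathbb{B}(1))^2\nu(d\lambda)\}^{1/2}$, matching the stated limit exactly. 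The point needing care is the continuity of $\Phi$ in the Skorokhod topology, since neither endpoint evaluation nor integration against $\nu$ is continuous on all of $D[0,1]$. The remedy is that the limit has almost surely continuous sample paths: whenever $f_n\to f$ in Skorokhod with $f\in C[0,1]$ the convergence is in fact uniform, so $f_n(1)\to f(1)$ and, the integrand converging uniformly and being bounded, $\int_0^1(f_n(\lambda)-\lambda^2 f_n(1))^2\nu(d\lambda)\to\int_0^1(f(\lambda)-\lambda^2 f(1))^2\nu(d\lambda)$ by bounded convergence. Thus $\Phi$ is continuous at every element of $C[0,1]$, and because $\lambda\zeta_j\mathbb{B}(\lambda)\in C[0,1]$ almost surely, the continuity-set condition of the continuous mapping theorem holds.

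The substantive analytic difficulty has already been discharged in Propositions \ref{z-approx}--\ref{prop1}; within this theorem the only real obstacle is verifying Skorokhod continuity of $\Phi$, which reduces to the almost sure path-continuity of the Gaussian limit together with the upgrade of Skorokhod convergence to uniform convergence against a continuous limit. The remainder is the bookkeeping identity recasting the self-normalizer as the $L^2(\nu)$-norm of $\hat Z^{(j)}_{m,n}(\lambda)-\lambda^2\hat Z^{(j)}_{m,n}(1)$.
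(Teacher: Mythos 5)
Your proposal is correct and follows essentially the same route as the paper: both coordinates are rewritten as the image of the process $\hat Z^{(j)}_{m,n}$ from \eqref{znhat} under the map $f\mapsto\bigl(f(1),\,\{\int_0^1(f(\lambda)-\lambda^2 f(1))^2\,\nu(d\lambda)\}^{1/2}\bigr)$, after which Proposition \ref{prop1} and the continuous mapping theorem give the stated limit. Your explicit verification that this map is Skorokhod-continuous at continuous paths (via the upgrade to uniform convergence against the a.s.\ continuous Gaussian limit) is a point the paper leaves implicit, but it is the same argument.
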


\begin{proof}
Observing the definition of  $\hat D^{(j)}_{m,n}$, $D^{(j)}$, $\hat {Z}^{(j)}_{m,n}$  and  $\hat V^{(j)}_{m,n} $  in   \eqref{2.6},  \eqref{dj} and   \eqref{znhat}
and \eqref{2.7}, we have
\begin{eqnarray*}
  \hat D^{(j)}_{m,n} - D^{(j)}  &=& \int^1_0 (\hat D_{m,n} (t,1))^2 dt - \int^1_0 D^2_j (t) dt  = \frac {\hat{Z}^{(j)}_{m,n}(1)}{\sqrt{m+n}} , \\
    \hat V^{(j)}_{m,n} &=& \Big \{ \int^1_0 \Big ( \int^1_0 \big [  (\hat D^{(j)}_{m,n} (t, \lambda))^{2} - \lambda^{2}   D_j ^2 (t) \big ] dt
 - \lambda^2 \int^1_0 \big [
( \hat D^{(j)}_{m,n} (t,1) )^{2} - D_j^2(t) \big ] dt  \Big )^2 \nu (d \lambda  ) \Big\}^{1/2}  \\
 &=& \frac {1}{\sqrt{m+n}} \Big \{ \int^1_0 \big( \hat  Z^{(j)}_{m,n} (\lambda) - \lambda^2 \hat  Z^{(j)}_{m,n} (1)\big)^2  \nu (d \lambda)  \Big \}^{1/2} ~.
\end{eqnarray*}
The assertion now follows directly from Proposition \ref{prop1} and the continuous mapping theorem.
\end{proof}

\subsection{Testing for relevant differences in multiple eigenfunctions}\label{sec-mult}
\label{sec3}
\def\theequation{3.\arabic{equation}}
\setcounter{equation}{0}

In this subsection, we are interested in testing if there is no relevant difference between several  eigenfunctions of the covariance operators $C^X$ and $C^Y$.
To be precise, let  $j_1 < \ldots < j_p$ denote positive indices defining the orders of the eigenfunctions   to be compared. This leads to testing the hypotheses
\begin{equation}\label{1.3}
H_{0,p}\colon
D^{(j_{\ell})}=
 \| v^X_{j_\ell} - v^Y_{j_\ell} \|^2_2 \leq \Delta_\ell   \quad \mbox{ for all }  \ell  \in \{  1, \ldots, p \} ,
\end{equation}
versus
\begin{equation}\label{1.3a}
H_{1,p}\colon
D^{(j_{\ell})}=
 \| v^X_{j_\ell} - v^Y_{j_\ell} \|^2_2  > \Delta_\ell  \quad \mbox{ for at least one }  \ell  \in \{  1, \ldots, p \} ,
\end{equation}
where $\Delta_1, \ldots, \Delta_p > 0$ are pre-specified constants.

After trying a number of methods to perform such a test, including deriving joint asymptotic results for the vector of  pairwise distances $\hat D_{m,n}   = \big (\hat  D^{(j_1)}_{m,n}, \ldots,   \hat D^{(j_p)}_{m,n}  \big )^\top,$  and using these to perform confidence region-type tests as described in \cite{aitchison1964}, we ultimately found that the best approach for relatively small $p$  was to simply apply the marginal tests as proposed above to each eigenfunction, and then control the family-wise error rate using a Bonferroni correction. Specifically, suppose $P_{j_1}$,\ldots,$P_{j_p}$ are $P$-values of the marginal relevant difference in eigenfunction tests calculated from \eqref{p-val-calc}.  Then, under the null hypothesis  $H_{0,p}$ in \eqref{1.3} is rejected at level $\alpha$ if $P_{j_k}<\alpha/p$ for some $k$ between $1$ and $p$. This asymptotically controls the overall type one error to be less than $\alpha$. A similar approach that we also investigated is the Bonferroni method with Holm correction; see \cite{holm:1979}. These methods are investigated by simulation in Section \ref{sec-mult-sim} below.

\section{Simulation study}\label{sec-simul}
\def\theequation{3.\arabic{equation}}
\setcounter{equation}{0}

A simulation study was conducted to evaluate the finite-sample performance of the tests described in \eqref{2.21:func}. Contained in this section is also a
kind of  comparison to the self-normalized two-sample test  introduced in  \cite{zhangshao2015}, hereafter referred to as the ZS test.
However, it should be emphasized that their test is for the classical hypothesis
\begin{equation}  \label{class}
H_{0,class} \colon \| v^X_j - v^Y_j  \|^2 =0
    ~~~~\mbox{ versus} ~~~~
    H^{(j)}_{1,class} \colon\| v^X_j - v^Y_j  \|^2  >0 ,
\end{equation}
 and not for the relevant hypotheses  \eqref{2.21:func} studied here. Such a comparison is nevertheless  useful to demonstrate that  both procedures behave similarly  in the different testing problems.
All simulations below were performed using the {\tt R} programming language \citep{rcore:2016}. Data were generated according to the basic model proposed and studied in \cite{panaretos:2010} and \cite{zhangshao2015}, which is of the form
\begin{align}\label{dgp-eq}
X_i(t)= \sum_{j=1}^{2}\left\{\xi_{X,j, 1}^{(i)} \sqrt{2} \sin \left(2 \pi j t+\delta_{j}\right)+\xi_{X , j, 2}^{(i)} \sqrt{2} \cos \left(2 \pi j t+\delta_{j}\right)\right\}, \qquad t \in[0,1],
\end{align}
for $i=1,\ldots,m,$ where the coefficients $\xi_{X,i}=(\xi_{X,1,1}^{i}, \xi_{X,2,1}^{i}, 
\xi_{X,1,2}^{i}, \xi_{X,2,2}^{i}
)^{\prime}$ were taken to follow a vector autoregressive model
$$
\xi_{X,i}=\rho \xi_{X,i-1}+\sqrt{1-\rho^{2}} e_{X,i},
$$
with $\rho=0.5$ and $e_{X,i} \in \mathbb{R}^{4}$ a sequence of iid 
normal random variables with mean zero and covariance matrix
$$
\Sigma_{e}= \operatorname{diag}(\mathbf{v_X}),
$$
with $\mathbf{v_X}=(\tau_1^{X},\ldots,\tau_4^{X})$.  Note that with this specification, the population level eigenvalues of the covariance operator of $X_i$ are $\tau_1^{X},\ldots,\tau_4^{X}$. If  $\delta_1=\delta_2=0$, the corresponding eigenfunctions are $v_1^X=\sqrt{2} \sin \left(2 \pi \cdot\right)$, $v_2^X=\sqrt{2}\cos \left(2 \pi \cdot\right)$, $v_3^X=\sqrt{2} \sin \left(4 \pi \cdot\right)$, and $v_4^X=\sqrt{2}\cos \left(4 \pi \cdot\right)$.  Each process $X_i$ was produced by evaluating the right-hand side of \eqref{dgp-eq} at 1{,}000 equally spaced points in the unit interval, and then smoothing over a cubic $B$-spline basis with 20 equally spaced knots using the {\tt fda} package; see \cite{ramsay:hooker:graves:2009}. A burn-in sample of length 30 was generated and discarded to produce the autoregressive processes. The sample  $Y_i$, $i=1,\ldots,n$, was generated independently in the same way, always choosing $\delta_j=0$, $j=1,2$, in \eqref{dgp-eq}. With this setup, one can produce data satisfying either $H_0^{(j)}$ or $H_1^{(j)}$ by changing the constants $\delta_j$.

In order to measure the finite sample properties of the proposed test for the hypotheses $H^{(j)}_0$ versus $H^{(j)}_1$ in  \eqref{2.21:func} ,  data was generated as described above from two scenarios:
\begin{itemize}
\itemsep-.5ex
  \item Scenario 1: $\mathbf{\tau_X}= \mathbf{\tau_Y}= (8,4,0.5,0.3)$, $\delta_2=0$, and $\delta_1$ varying from $0$ to $0.25$.
  \item Scenario 2: $\mathbf{\tau_X}= \mathbf{\tau_Y}= (8,4,0.5,0.3)$, $\delta_1=0$, and $\delta_2$ varying from $0$ to $2$.
\end{itemize}

In both cases, we tested the  hypotheses  \eqref{2.21:func} with $\Delta_j=0.1$, for $j=1,2,3$. We took the measure $\nu$, used to define the self-normalizing sequence in \eqref{2.7}, to be the uniform probability measure on the interval $(0.1,1)$. We also tried other values between 0 and 0.2 for the lower bound of this uniform measure and found that selecting values above 0.05 tended to yield similar performance. When $\delta_1 \approx 0.05$, $\|v_j^X - v_j^Y\|^2_2 \approx 0.1$, and taking $\delta_1 = 0.25$ causes $v_j^X$ and $v_j^Y$ to be orthogonal, $j=1,2$.  Hence the null hypothesis $H^{(j)}_0$ holds for $\delta_1< 0.05$, and $H^{(j)}_1$ holds for $\delta_1>0.05$ for $j=1,2$. Similarly, in Scenario 2, one has that $\|v_j^X - v_j^Y\|^2_2 \approx 0.1$ when $\delta_2=0.3155$, $j=3,4$. For this reason, we let $\delta_2$ vary from $0$ to 2. In reference to Remark \ref{eig-rem}, we obtained via simulation that the parameter $\zeta_j$ for the largest eigenvalue process is approximately 4 when $\delta_1=0$ and approximately 10.5 when $\delta_1=0.25$.


The percentage of rejections from 1{,}000 independent simulations when the size of the test is fixed at 0.05 are reported in Figures \ref{fig1:sim} and \ref{fig2:sim} as power curves that are functions of $\delta_1$ and $\delta_2$ when $n=m=50$ and 100. These figures also display the number of rejections of the ZS test for the classical hypothesis \eqref{class} (which corresponds to $H^{(j)}_0$ with $\Delta_j=0$). From this, the following conclusions can be drawn.

\begin{enumerate}
  \item The tests of $H_0^{(j)}$ based on $\hat{\mathbb{W}}^{(1)}_{m,n}$ exhibited the behaviour predicted by \eqref{test-bev}, even for relatively small values of $n$ and $m$. Focusing on the tests of $H_0^{(j)}$ with results displayed in Figure \ref{fig1:sim}, we observed that the empirical rejection rate was less than nominal for $\|v_1^{X}-v_1^{Y}\|^2<\Delta_{1}=0.1$, approximately nominal when $\|v_1^{X}-v_1^{Y}\|^2=\Delta_{1}=0.1$, and the power increased as $\|v_1^{X}-v_1^{Y}\|^2$ began to exceed $0.1$. In additional simulations not reported here, these results were improved further by taking larger values of $n$ and $m$.
  \item Observe that with data generated according to Scenario 2, $H_0^{(2)}$ is satisfied while $H_0^{(3)}$ is not satisfied for values of $\delta_2>0.3155$. This scenario is seen in Figure \ref{fig2:sim} where the tests for $H_0^{(2)}$ exhibited less than nominal size, as predicated by \eqref{test-bev}, even in the presence of differences in higher-order eigenfunctions. The tests of $H_0^{(3)}$ performed similarly to the tests of $H_0^{(1)}$.
  \item The self-normalized ZS test for the classical hypothesis \eqref{class}, which is based on the bootstrap, performed well in our simulations, and exhibited empirical size approximately equal to the nominal size when $\|v_1^{X}-v_1^{Y}\|^2=0$, and increasing power as $\|v_1^{X}-v_1^{Y}\|^2$ increased. For the sample size $m=n=50$ it overestimated the nominal level of $5\%$.
   Interestingly, the proposed tests tended to exhibit higher power than the ZS test for large values of $\|v_1^{X}-v_1^{Y}\|^2$, even while only testing for relevant differences. Additionally, the computational time required to perform the proposed test is substantially less than what is required to perform the ZS test, since it does not need to employ the bootstrap.
\end{enumerate}

\begin{figure}[H]
\begin{minipage}{.49\textwidth}
            \centering
            \includegraphics[width=.99\linewidth]{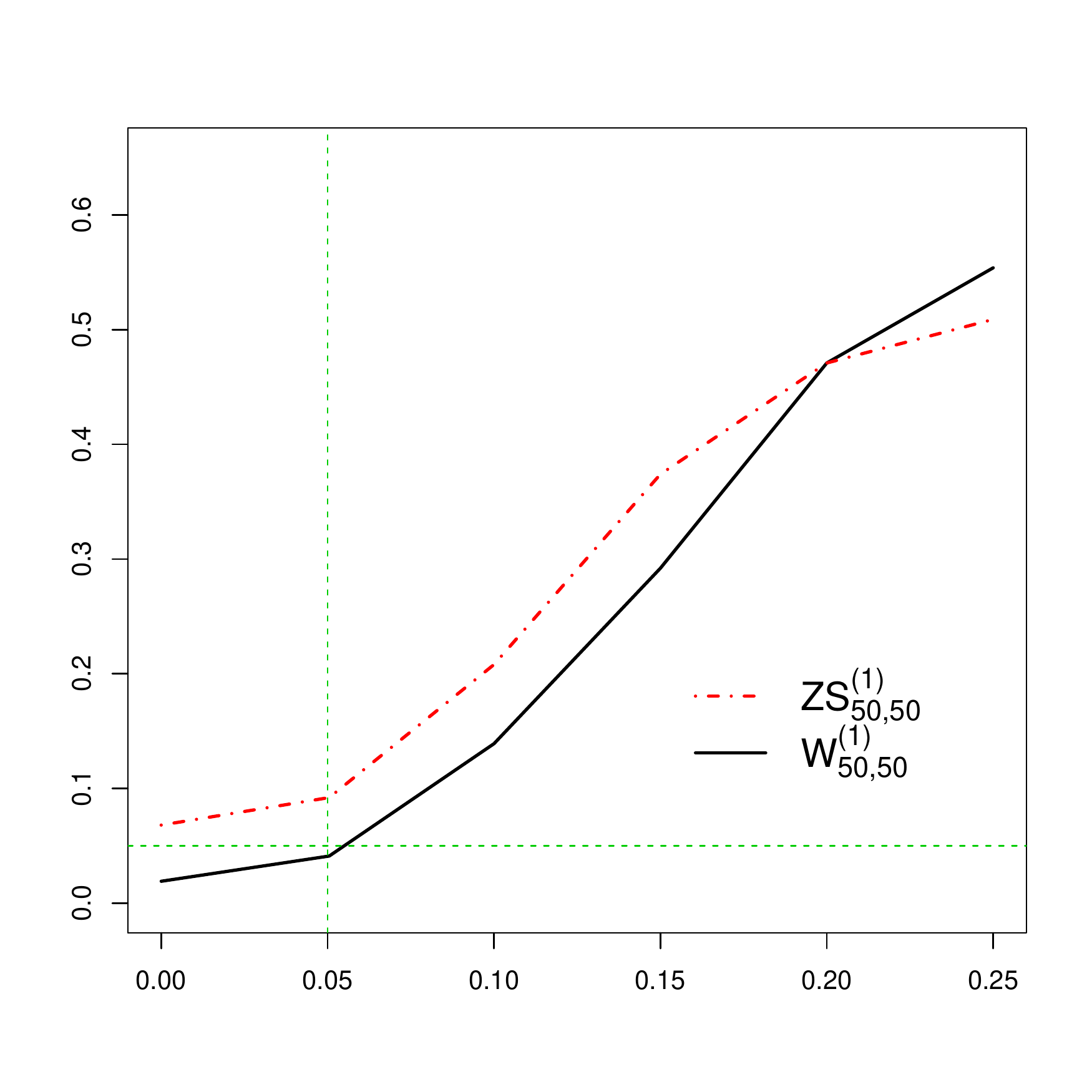}
        \end{minipage}%
        \begin{minipage}{.49\textwidth}
            \centering
            \includegraphics[width=.99\linewidth]{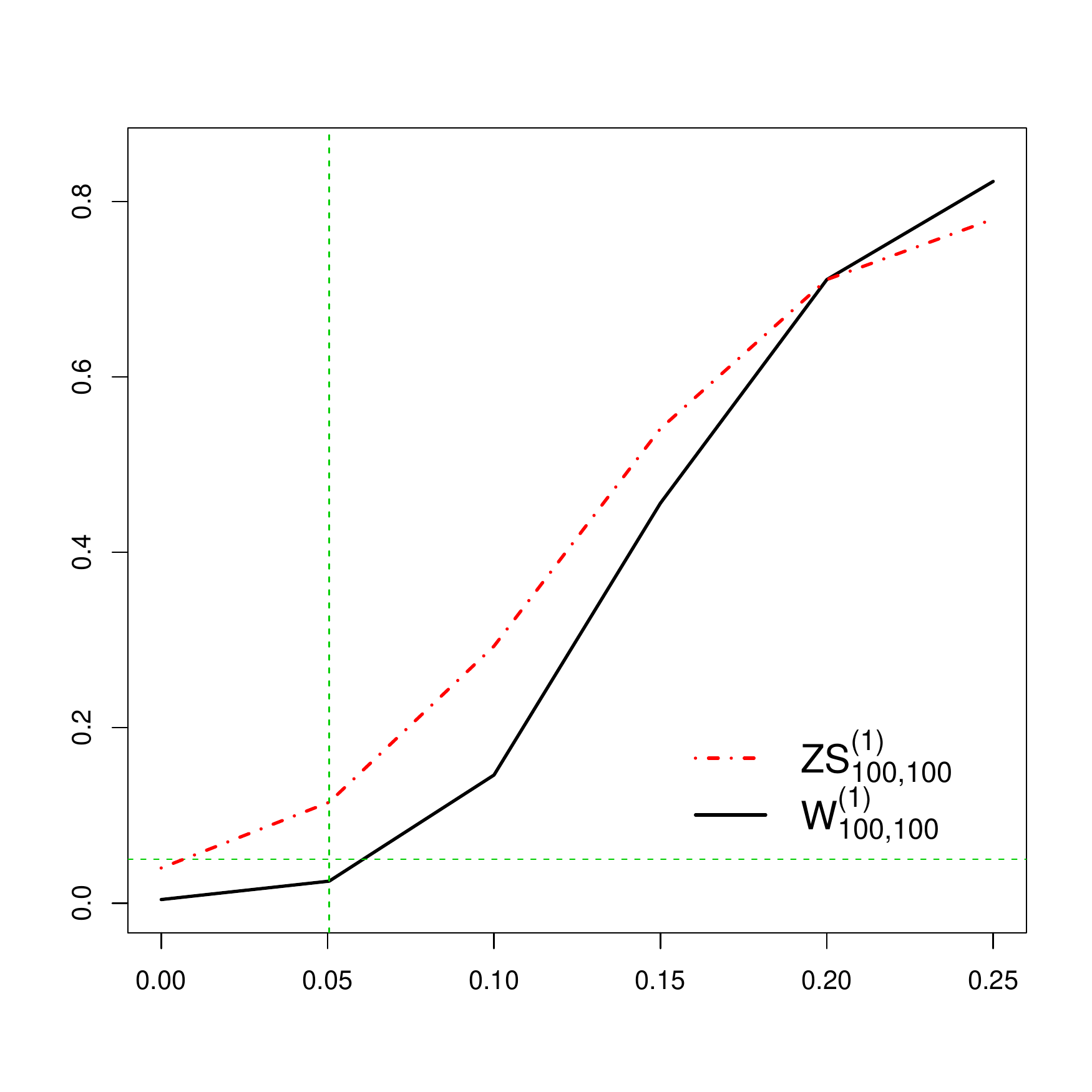}
        \end{minipage}
    \caption{
 Percentage of rejections (out of 1{,}000 simulations) of the self-normalized statistic of \cite{zhangshao2015} for the classical hypotheses \eqref{class}  (denoted $ZS_{n,m}^{(1)}$)
    and the new test \eqref{testone}  for the relevant hypohteses \eqref{2.21:func} (denoted by  $W^{(1)}_{m,n}$) as a function of $\delta_1$ in Scenario 1.
     In the left hand panel $n=m=50$, and in the right hand panel $n=m=100$. The horizontal green line is at the nominal level 0.05, and the vertical green line at $\delta_1=0.05$ indicates the case when $\|v_1^{X}-v_1^{Y}\|^2=\Delta_1=0.1.$}.
      \label{fig1:sim}
\end{figure}

\begin{figure}[H]
\begin{minipage}{.5\textwidth}
            \centering
            \includegraphics[width=.992\linewidth]{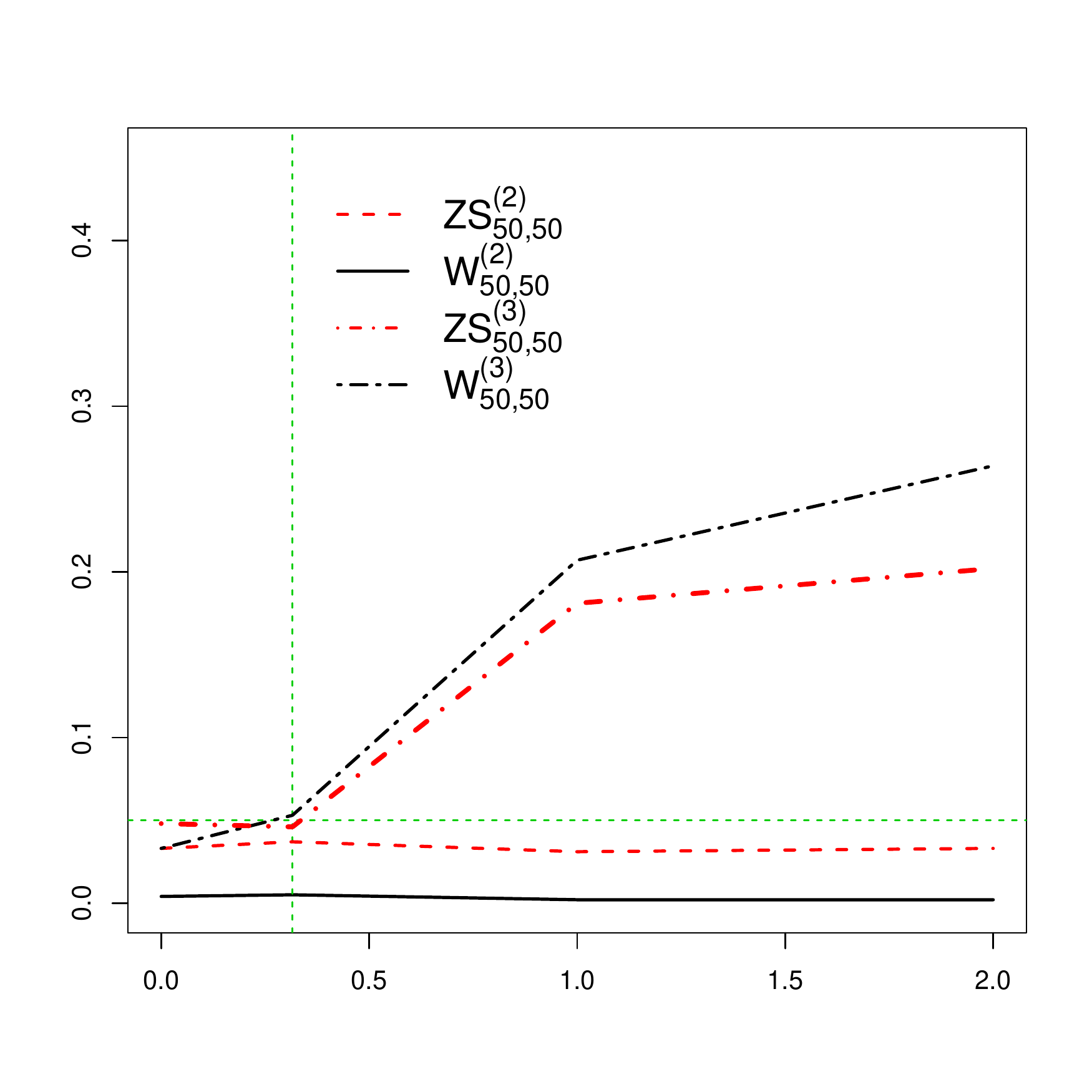}
        \end{minipage}%
        \begin{minipage}{.5\textwidth}
            \centering
            \includegraphics[width=.992\linewidth]{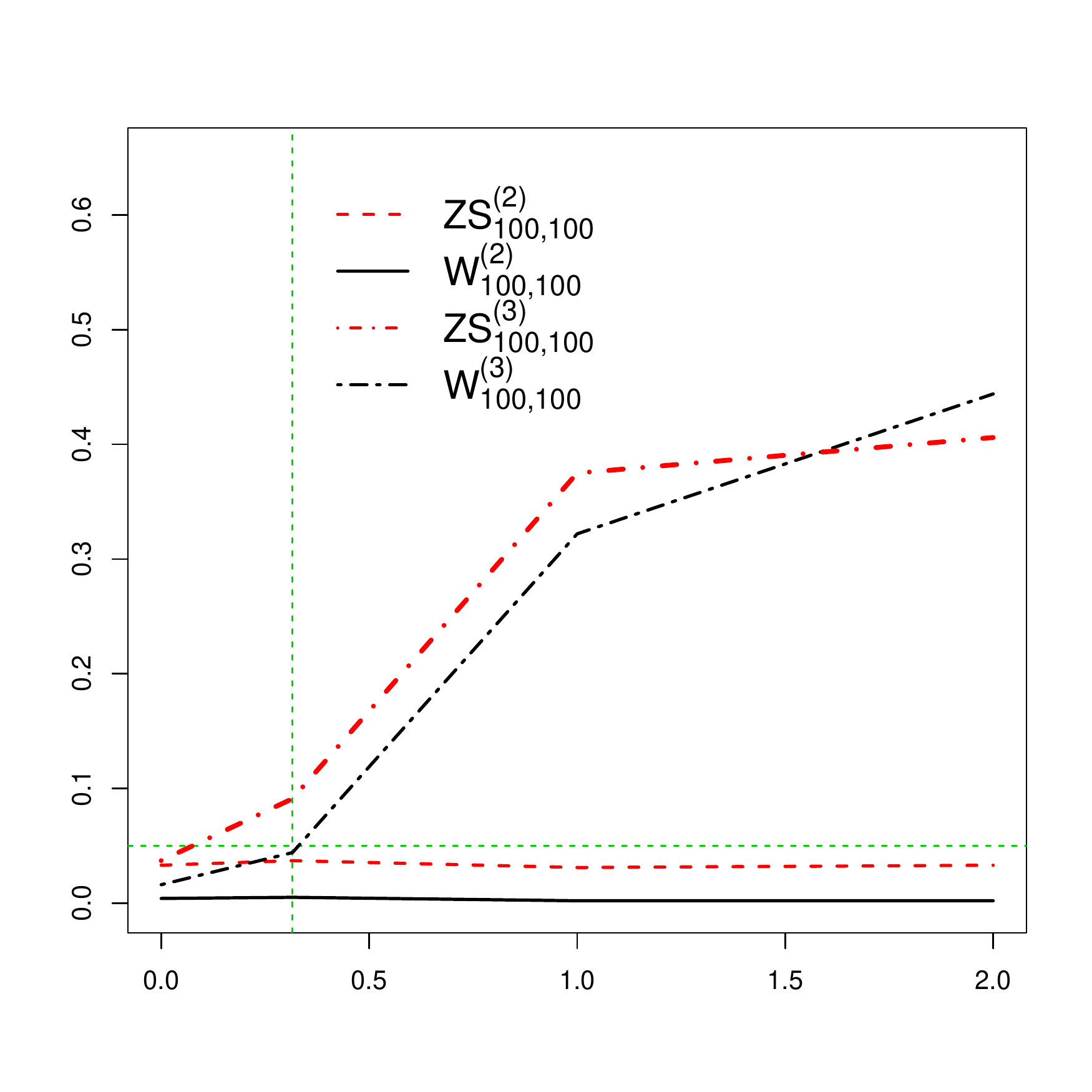}
        \end{minipage}
        \caption{
         Percentage of rejections (out of 1{,}000 simulations) of the self-normalized statistic of \cite{zhangshao2015} for the classical hypotheses \eqref{class}  (denoted $ZS_{n,m}^{(j)}$, $j=2,3$)
    and the new test \eqref{testone}  for the relevant hypohteses \eqref{2.21:func} (denoted by  $W^{(j)}_{m,n}$, $j=2,3$) as a function of $\delta_2$ in Scenario 2.
     In the left hand panel $n=m=50$, and in the right hand panel $n=m=100$.
The horizontal green line is at the nominal level 0.05, and the vertical green line at $\delta_2=0.3155$ indicates the case when $\|v_3^{X}-v_3^{Y}\|^2=\Delta_1=0.1$.}\label{fig2:sim}
\end{figure}

\subsection{Multiple comparisons}\label{sec-mult-sim}
In order to investigate the multiple testing procedure of Section \ref{sec-mult}, $X$ and $Y$ samples were generated according to model \eqref{dgp-eq} with $n=m=100$ in two situations: one with $\delta_1=0.0504915$ and $\delta_2= 0.3155$, and another with  $\delta_1=0.25$ and $\delta_2= 2$. In the former case, $\|v_j^{X}-v_j^{Y}\|^2\approx0.1$ for $j=1,\ldots,4$, while in the latter case  $\|v_j^{X}-v_j^{Y}\|^2>0.1$, $j=1,\ldots,4$. We then applied tests of $H_{0,p}$ in \eqref{1.3} with $\Delta_j=0.1$ for $j=1,\ldots,4$ and varied $p=1,\ldots,4$. These tests were carried out by combining the marginal tests for relevant differences of the respective eigenfunctions using the standard Bonferroni correction as well as the Holm--Bonferroni correction. Empirical size and power calculated from 1{,}000 simulations with nominal size $0.05$ for each value of $p$ and correction are reported in Table \ref{pow:tab}. It can be seen that these corrections controlled the family-wise error rate well. The tests still retain similar power when comparing up to four eigenfunctions, although one may notice the declining power as the number of tests increases.

\begin{table}[H]
\vspace{.2cm}
\centering
\begin{tabular}{l l r@{\qquad} r r r r}
\hline
 {$\delta_1$} &  {$\delta_2$} & & { $j=1$ } & {$2$} & {$ 3 $} &{ $4$ }  \\ \hline
0.0504915 &  0.3155 & B & 0.036 & 0.021 & 0.018 & 0.017 \\
&& HB & 0.037 & 0.036 & 0.024 & 0.025 \\
0.25    & 2 & B & 0.750 & 0.678 & 0.668 & 0.564 \\
&&  HB & 0.750 & 0.798 &0.716 & 0.594 \\
\hline
\end{tabular}
\caption{Rejection rates from 1{,}000 simulations of tests $H_{0,j}$ with nominal level $0.05$ for $j=1,
\ldots,4$ and Bonferroni (B) and Holm--Bonferroni (HB) corrections.}\label{pow:tab}
\end{table}

\section{Application to Australian annual temperature profiles}
\label{sec4}
\def\theequation{4.\arabic{equation}}
\setcounter{equation}{0}

To demonstrate the practical use of the tests proposed above, the results of an application to annual minimum temperature profiles are presented next. These functions were constructed from data collected at various measuring stations across Australia. The raw data consisted of approximately daily minimum temperature measurements recorded in degrees Celsius over approximately the last 150 years at six stations, and is available in the \texttt{R} package \texttt{fChange}, see \cite{fChange:2017}, as well as from {\tt www.bom.gov.au}. The exact station locations and time periods considered are summarized in Table \ref{stat:tab}. In addition, Figure \ref{Ausmap:fig} provides a map of eastern Australia showing the relative locations of these stations.

\begin{table}[H]
\vspace{.3cm}
\centering
\begin{tabular}{l@{\qquad\qquad}l}
\hline
Location & Years   \\
\hline
Sydney, Observatory Hill & 1860--2011~~(151) \\
Melbourne, Regional Office & 1856--2011~~(155) \\
Boulia, Airport$^*$ & 1900--2009~~(107) \\
Gayndah, Post Office & 1905--2008~~(103) \\
Hobart, Ellerslie Road & 1896--2011~~(115) \\
Robe & 1885--2011~~(126) \\
\hline
\end{tabular}
\caption{Locations and names of six measuring stations at which annual temperature data was recorded, and respective observation periods. In brackets are the numbers of available annual temperature profiles. The 1932 and 1970 curves were removed from the Boulia series due to missing values.}\label{stat:tab}
\end{table}

\begin{figure}[H]
\begin{center}
            \includegraphics[width=.592\linewidth,height=0.632\linewidth]{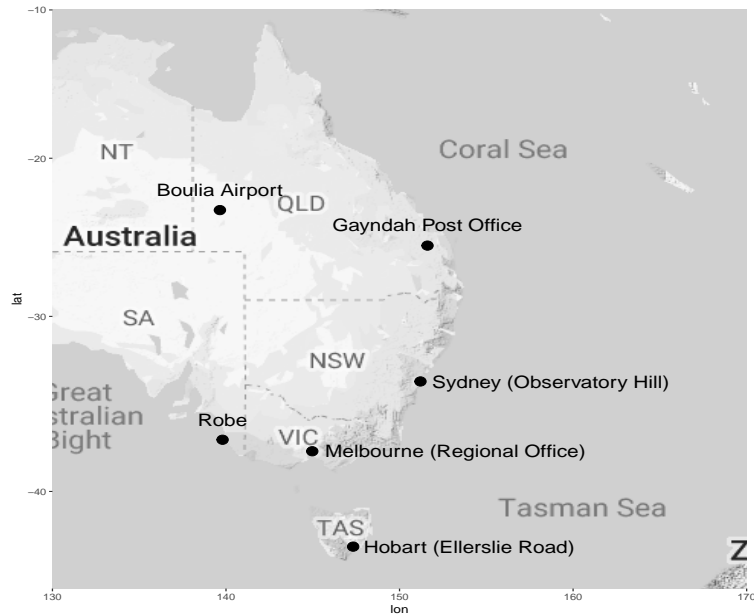}
            \vspace{-.7cm}
    \caption{Map of eastern Australia showing the locations of the six measuring stations whose data were used in the data analysis. This map was produced using the \texttt{ggmap} package in \texttt{R}; see \cite{ggmap:2013}.}\label{Ausmap:fig}
\end{center}
\end{figure}

In each year and for each station, 365 (366 in leap years) raw data points were converted into functional data objects using cubic B-splines at 20 equally spaced knots using the \texttt{fda} package in \texttt{R}; see \cite{ramsay:hooker:graves:2009} for details. We also tried using cubic B-splines with between 20 and 40 equally spaced knots, as well as using the same numbers of standard Fourier basis elements to smooth the raw data into functional data objects, and the test results reported below were essentially unchanged. The resulting curves from the stations located in Sydney and Gayndah are displayed respectively in the left and right hand panels of Figure \ref{Syd:fig} as rainbow plots, with earlier curves drawn in red and progressing through the color spectrum to later curves drawn in violet; see \cite{rainbow:2016}. One may notice that the curves appear to generally increase in level over the years. In order to remove this trend, a linear time trend was estimated for the series of average yearly minimum temperatures, and then this linear trend was subtracted pointwise from the time series of curves. The detrended Sydney and Gayndah curves are displayed again as rainbow plots in the left and right-hand panels of Figure \ref{SydDM:fig}, which appear to be fairly stationary.

\begin{figure}[H]
\begin{minipage}{.5\textwidth}
            \centering
            \includegraphics[width=.992\linewidth]{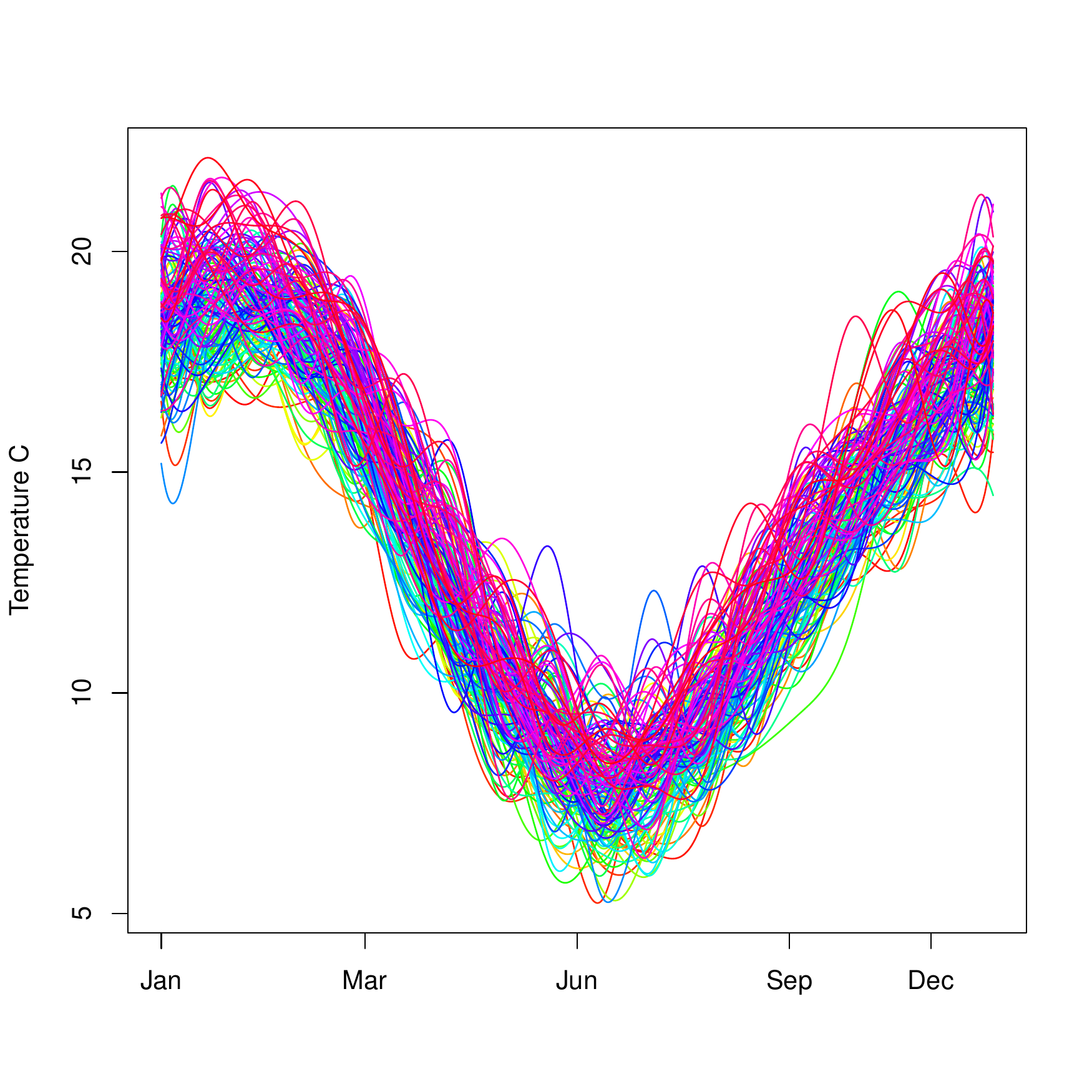}
        \end{minipage}%
        \begin{minipage}{.5\textwidth}
            \centering
            \includegraphics[width=.992\linewidth]{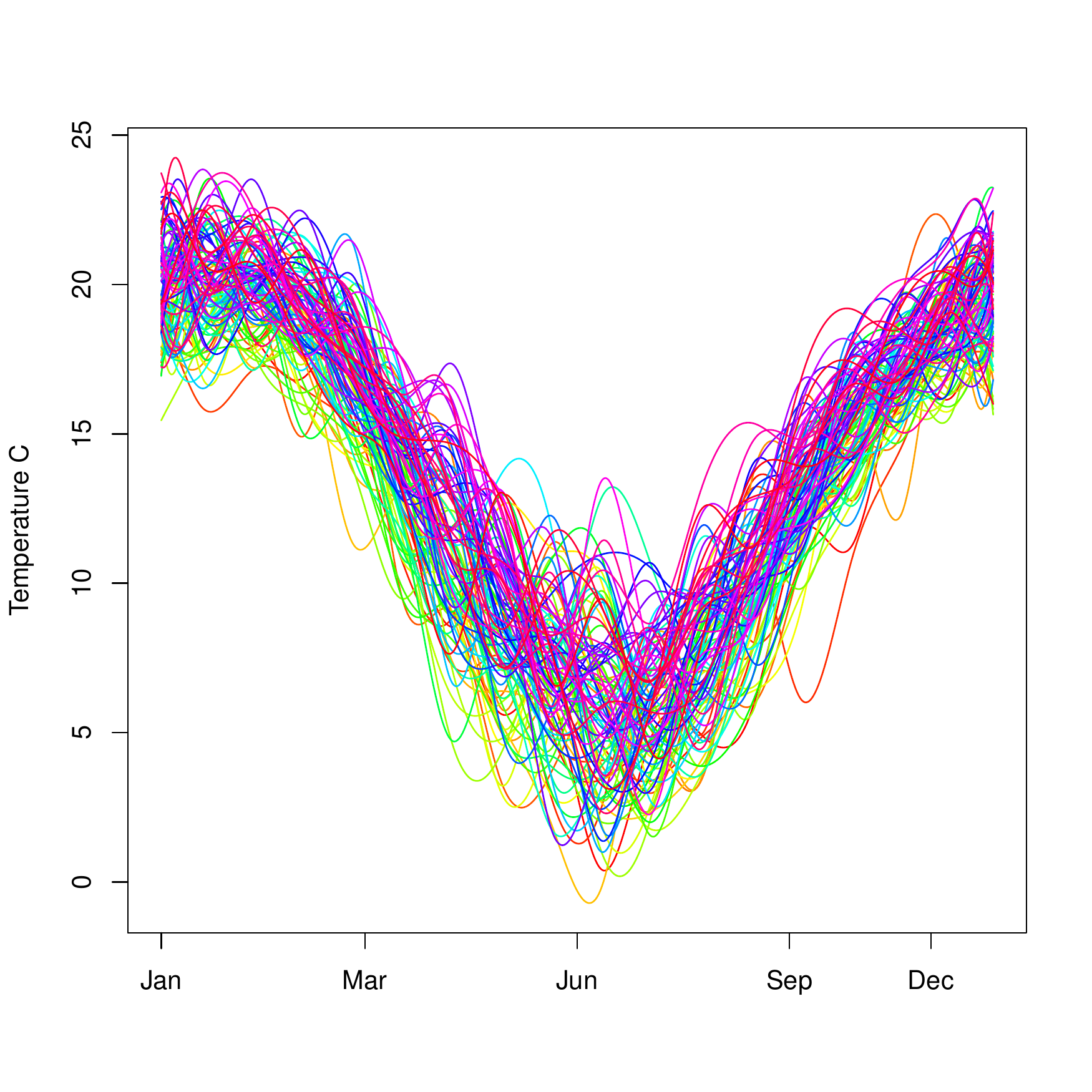}
        \end{minipage}
    \caption{Rainbow plots of minimum temperature profiles based on data collected at the Sydney (left panel) and Gayndah (right panel) stations constructed using cubic B-splines.}\label{Syd:fig}
\end{figure}

\begin{figure}[H]
\begin{minipage}{.5\textwidth}
            \centering
            \includegraphics[width=.992\linewidth]{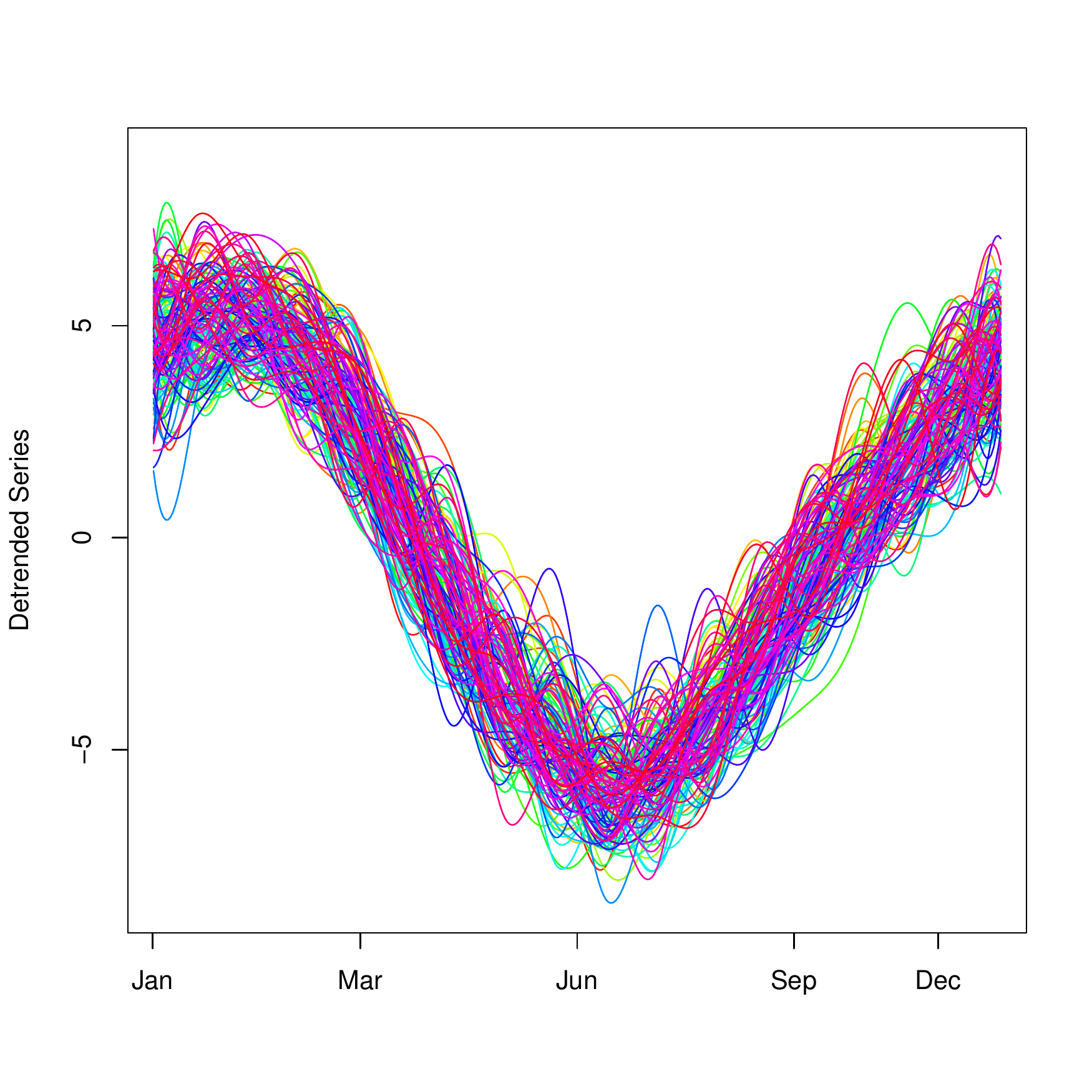}
        \end{minipage}%
        \begin{minipage}{.5\textwidth}
            \centering
            \includegraphics[width=.992\linewidth]{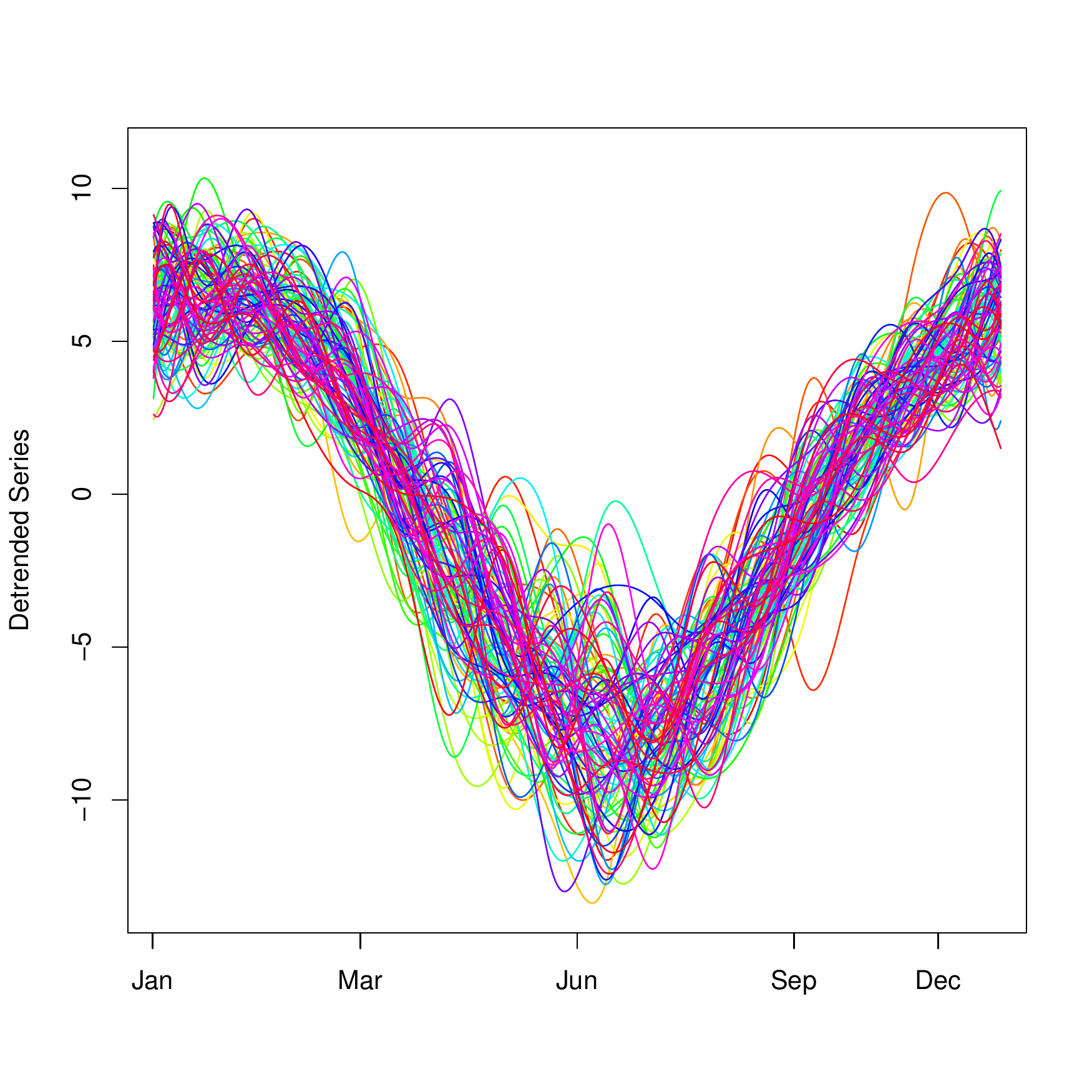}
        \end{minipage}
    \caption{Rainbow plots of detrended minimum temperature profiles from Sydney (left panel) and Ganydah (right panel). Detrending was carried out by fitting a linear time trend to the series of average yearly minimum temperatures, and then removing this trend pointwise from the time series of curves.}\label{SydDM:fig}
\end{figure}

We took as the goal of the analysis to evaluate whether or not there are relevant differences in the primary modes of variability of these curves between station locations, as measured by differences in the leading eigenfunctions of the sample covariance operators. We applied tests of $H_0^{(1)}$ and $H_0^{(2)}$ with thresholds $\Delta_1=\Delta_2=0.1$ based on the statistics $\hat{\mathbb{W}}^{(j)}_{m,n}$, $j=1,2$, to each pair of functional time series from the six stations. The results of these tests are reported in terms of $P$-values in Table \ref{h1:tab}. Plots of the estimated leading eigenfunctions from each sample are displayed in Figure \ref{efun:fig}.

One observes in five out of six stations, excluding the Gayndah station, that the leading eigenfunction of the sample covariances operators is approximately constant, suggesting that the primary mode of variability of those temperature profiles is essentially level fluctuations around the increasing trend. Pairwise comparisons based on tests of $H_0^{(1)}$ suggest that these functions in general do not exhibit relevant differences to any reasonable significance. In contrast, the leading eigenfunction calculated from the Gayndah station curves evidently puts more mass in the winter months than the summer months. This is almost expected given the comparison of the detrended curves in Figure \ref{SydDM:fig}, in which the Gayndah curves evidently exhibit more variability in the winter months relative to the Sydney curves. Pairwise comparisons of the Gayndah data with the other stations suggest that this difference is significant, and even that the change is relevant to the level $\Delta_1=0.1$. The analysis of the second eigenfunction leads to a similar conclusion here: the stations other than Gayndah have similar eigenfunction structure, and the curves calculated from the Gayndah station have different eigenfunction structure. However, for the second eigenfunction conclusions about the uniqueness of the Gayndah station cannot be made with the same level of confidence as for the first eigenfuction.

\begin{table}[!t]
\centering
{\setlength\tabcolsep{5.5pt}%
\begin{tabular}{lrrrrr}
\hline
\multicolumn{6}{c}{$H_0^{(1)}, \; \Delta_1=0.1$}  \\
\hline
& Melbourne & Boulia & Gayndah & Hobart & Robe   \\
Sydney   & 0.2075 & 0.4545 & {\bf 0.0327} &  0.2211 & 0.5614 \\
Melbourne     & & 0.1450 & {\bf 0.0046} &  0.5007 & 0.2203 \\
Boulia  &&&  {\bf 0.0466} & {\bf 0.0321} &0.5419 \\
Gayndah  &&&&  {\bf 0.0002} & {\bf 0.0011} \\
Hobart &&&&& 0.0885 \\
\hline
\multicolumn{6}{c}{$H_0^{(2)}, \; \Delta_2=0.1$}\\
\hline
& Melbourne & Boulia & Gayndah & Hobart & Robe   \\
Sydney  & 0.1712 & 0.0708 &  0.0865 & 0.1201 &  0.0785 \\
Melbourne & &0.0862 &  {\bf 0.0082} & 0.1502 &  0.1778 \\
Boulia & & &  0.0542 & 0.0553 & 0.1438 \\
Gayndah & & & & {\bf 0.0371} &  {\bf 0.0037} \\
Hobart & & & & & 0.4430 \\
\hline
\end{tabular}}
\caption{Approximate $P$-values of tests of $H_0^{(1)}$ and $H_0^{(2)}$ with $\Delta_1=\Delta_2=0.1$ for all pairwise comparisons of the series of curves from each of the six monitoring stations. Values that are less than 0.05 are {\bf bolded}. }\label{h1:tab}
\label{tab1}

\end{table}

\begin{figure}[H]
\begin{minipage}{.5\textwidth}
            \centering
            \includegraphics[width=.992\linewidth]{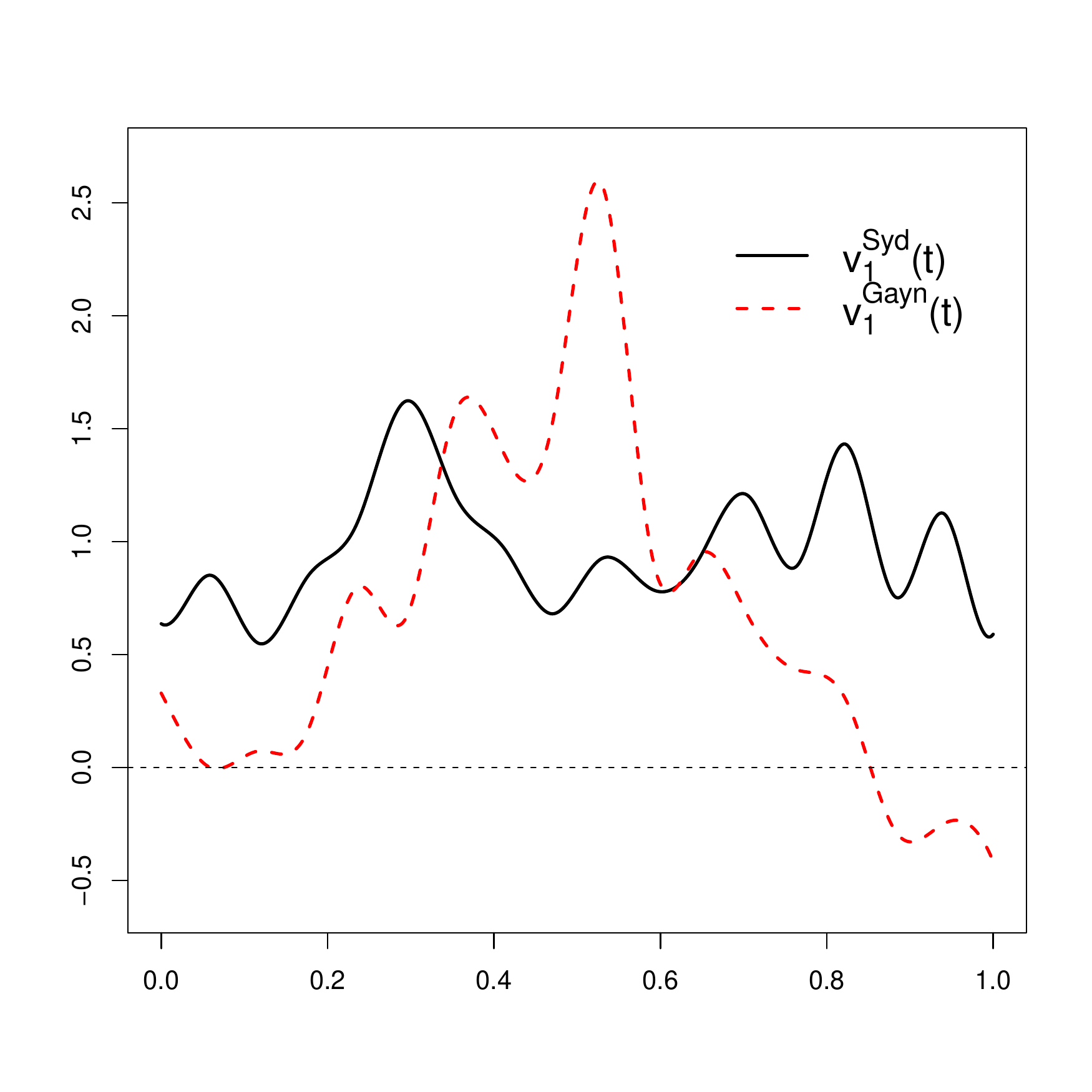}
        \end{minipage}%
        \begin{minipage}{.5\textwidth}
            \centering
            \includegraphics[width=.992\linewidth]{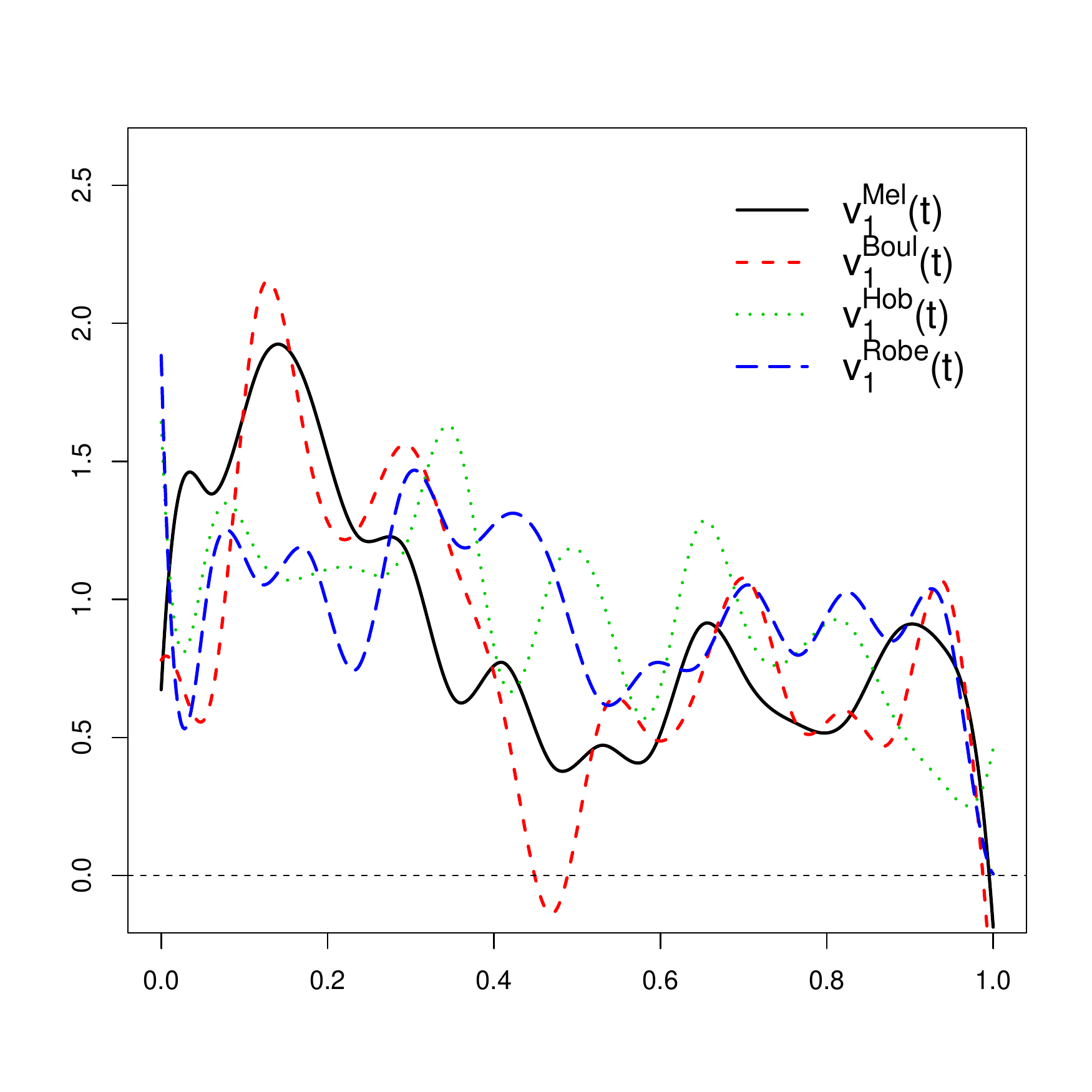}
        \end{minipage}
    \caption{Left panel: Plot of sample eigenfunctions corresponding to the largest eigenvalue of the sample covariance operators of the Sydney and Gayndah detrended minimum temperature profiles,  $\hat{v}_1^{\rm Syd}$ and $\hat{v}_1^{\rm Gayn}$. A test of $H_0^{(1)}$ suggests that the squared norm of the difference between these curves is significantly larger than 0.1 (P-value $\approx 0.0327$). Right panel: Plots of sample eigenfunctions corresponding to the largest eigenvalues of the sample covariance operators from the remaining four stations.}
    \label{efun:fig}
\end{figure}

\section{Conclusions}
\label{sec:conclusions}
\def\theequation{5.\arabic{equation}}
\setcounter{equation}{0}

In this paper, new two-sample tests were introduced to detect relevant differences in the eigenfunctions  and eigenvectors of covariance operators of two independent functional time series. These tests can be applied both marginally and, with Bonferroni-type corrections, jointly. The tests are constructed utilizing a self-normalizing strategy, leading to an intricate theoretical analysis to derive the large-sample behavior of the proposed tests. Finite-sample evaluations, done through a simulation study and an application to annual minimum temperature data from Australia, highlight that the tests have very good finite sample properties and
exhibit the features predicted by the theory.

 \section{Technical details }
\label {sec:proofs}
\def\theequation{6.\arabic{equation}}
\setcounter{equation}{0}

In this section  we provide  the technical details required for the arguments given in Section \ref{sec22}.

\subsection{Proof of Proposition \ref{z-approx}} \label{appendix1}
Below let $\int:= \int_{0}^{1}$ for brevity. According to the definitions of $\hat{\tau}^X_j(\lambda), \hat{v}^X_j(t,\lambda),\tau^X_j,$ and $v^X_j$, a simple calculation shows that for almost all $t\in[0,1]$,
\begin{align}\label{l3-0}
\int (C^X(t,s) & + (\hat{C}_m^X(t,s,\lambda) - C^X(t,s) ) )(v^X_j(s)+ (\hat{v}^X_j(s,\lambda)-v^X_j(s)))ds \\ \nonumber
&= (\tau^X_j  + (\hat{\tau}^X_j(\lambda)-\tau^X_j))(v^X_j(t)+ (\hat{v}^X_j(t,\lambda)-v^X_j(t))).
\end{align}
The sequence $\{v^X_j\}_{i\in\mathbb{N}}$ forms an orthonormal basis of $L^2([0,1])$, and hence there exist coefficients $\{\xi_{j,\lambda}\}_ {j\in\mathbb{N}}$ such that
\begin{align}\label{lem4-1}
\hat{v}^X_j(t,\lambda) - v^X_j(t) = \sum_{i=1}^\infty \xi_{i,\lambda}v^X_i(t),
\end{align}
for almost every $t$ in $[0,1]$. By rearranging terms in \eqref{l3-0}, we see that
\begin{align}\label{lem4-2}
\int C^X(t,s)(\hat{v}^X_j(s,\lambda) & - v^X_j(s))ds + \int \left( \hat{C}_m^X(t,s,\lambda) -  C^X(t,s)\right)v^X_j(s)ds \\
&= \tau^X_j (\hat{v}^X_j(t,\lambda)- v^X_j(t)) + \left(\hat{\tau}^X_j(\lambda) - \tau^X_j\right)v^X_j(t)+ G_{j,m}(t,\lambda), \notag
\end{align}
where
$$
G_{j,m}(t,\lambda)= \int  [C^X(t,s) - \hat{C}_m^X(t,s,\lambda)] [ \hat{v}^X_j(s,\lambda)-v^X_j(s)]ds +[\hat{\tau}^X_j(\lambda)-\tau^X_j][\hat{v}^X_j(t,\lambda)-v^X_j(t)].
$$
Taking the inner product on the left and right hand sides of \eqref{lem4-2} with $v_k$, for $k\ne i$, and employing \eqref{lem4-1} yields
$$
 \tau^X_k \xi_{k,\lambda}  + \intt \left( \hat{C}_m^X(t,s,\lambda) -  C^X(t,s)\right)v^X_j(s)v^X_k(t)dsdt= \tau^X_j \xi_{k,\lambda}+ \langle G_{j,m}(\cdot,\lambda),v^X_k \rangle,
$$
which implies that
\begin{align}\label{lem4-3}
\xi_{k,\lambda} =  \frac{\langle  \hat{C}_m^X(\cdot,\cdot,\lambda)-  C^X, v^X_j \otimes v^X_k \rangle }{\tau^X_j - \tau^X_k}- \frac{\langle G_{j,m}(\cdot,\lambda),v^X_k \rangle}{\tau^X_j - \tau^X_k},
\end{align}
for all ${\lambda \in [0,1]}$ and $k \ne i$. Furthermore, by the parallelogram law,
\begin{align}\label{lem4-5}
\xi_{i,\lambda} = \langle v^X_j, \hat{v}^X_j(\cdot,\lambda) - v^X_j \rangle = -\frac{1}{2} \| \hat{v}^X_j(\cdot,\lambda) - v^X_j\|^2.
\end{align}
Let $S_{j,X} = \min\{  \tau_{j-1}^X- \tau_j^X ,\tau_{j}^X- \tau_{j+1}^X\}$ for $j \geq 2$ and $S_{1,X} = \tau_{1}^X- \tau_2^X $. By    Assumption \ref{as-spacing} and the fact that $j \le d$ we have $S_{j,X} >0$. Hence,  Lemma 2.2 in \cite{horvkoko2012} (see also Section 6.1 of \cite{gohberg1990}) implies for all $\lambda \in [0,1]$,
\begin{align}\label{l3-3.5}
\sqrt{\lambda}\|\hat{v}^X_j(\cdot,\lambda)-v^X_j\| \le \frac{1}{S_{j,X}} \big\| \sqrt{\lambda}[\hat{C}_m^X(\cdot,\cdot,\lambda)- C^X] \big\|.
\end{align}
Further,
\begin{align*}
  \sqrt{\lambda}[\hat{C}_m^X(t,s,\lambda)- C^X(t,s)] &= \frac{\sqrt{\lambda}}{\lfloor m \lambda \rfloor} \sum^{\lfloor m \lambda \rfloor}_{i=1} (X_i(t) X_i(s) - C^X(t, s)) \notag \\
  & = \frac{1}{\sqrt{m}}\frac{\sqrt{m\lambda}}{\sqrt{\lfloor m \lambda \rfloor}} \frac{1}{\sqrt{\lfloor m \lambda \rfloor}}  \sum^{\lfloor m \lambda \rfloor}_{i=1} (X_i(t) X_i(s) - C^X(t, s)).
  \end{align*}
It is easy to show using Cauchy--Schwarz inequality that the sequence $X_i(\cdot) X_i(\cdot) - C^X(\cdot, \cdot) \in L^2([0,1])^2$ is $L^{2+\kappa}$-$m$-approximable for some $\kappa >0$  if $X_i$ is $L^{p}$-$m$-approximable for some $p>4$. Lemma B.1 from the Supplementary Material of  \cite{aue:rice:sonmez:2018} can be  generalized to $L^{2+\kappa}$-$m$-approximable random variables taking values in $L^2([0,1]^2)$, from which it follows that
$$
\sup_{\lambda \in [0,1]}\frac{1}{\sqrt{\lfloor m \lambda \rfloor}}  \Big\| \sum^{\lfloor m \lambda \rfloor}_{i=1} (X_i(\cdot) X_i(\cdot) - C^X(\cdot, \cdot)) \Big\| = O_\mathbb{P}(\log^{(1/\kappa)}(m)).
$$
Using  this and combining with \eqref{l3-3.5}, we obtain the bound
\begin{align}\label{c-approx}
\sup_{\lambda \in [0,1]}\Big\| \sqrt{\lambda}[\hat{C}_m^X(\cdot,\cdot,\lambda)- C^X] \Big\|O_\mathbb{P}\Big({\log^{(1/\kappa)}(m)}{\sqrt{m}} \Big),
\end{align}
and the estimate \eqref{v-approx-1x}. Furthermore, using the bound that
$$
|\hat{\tau}^X_j(\lambda) - \tau^X_j| \le  \big\| \hat{C}_m^X(\cdot,\cdot,\lambda)- C^X \big\|,
$$
we obtain by similar arguments that
\begin{align}\label{eigen-approx-2}
\sup_{\lambda \in [0,1]} \sqrt{\lambda}|\hat{\tau}^X_j(\lambda) - \tau^X_j| = O_\mathbb{P}\Big(\frac{\log^{(1/\kappa)}(m)}{\sqrt{m}}\Big).
\end{align}
Using the triangle inequality, Cauchy--Schwarz inequality, and combining \eqref{c-approx} and \eqref{eigen-approx-2}, it follows
\begin{align}\label{G-approx}
\sup_{\lambda \in [0,1]} \lambda\|G_{j,m}(\cdot,\lambda)\| \le&  \sqrt{\lambda}\Big\|[\hat{C}_m^X(\cdot,\cdot,\lambda)- C^X] \Big\|\sup_{\lambda \in [0,1]} \sqrt{\lambda}\| \hat{v}(\cdot,\lambda) - v^X_j\| \\
&+\sup_{\lambda \in [0,1]} \sqrt{\lambda}|\hat{\tau}^X_j(\lambda) - \tau^X_j| \sup_{\lambda \in [0,1]}\sqrt{\lambda} \| \hat{v}(\cdot,\lambda) - v^X_j\|
=O_\mathbb{P}\Big(\frac{\log^{(2/\kappa)}(m)}{m} \Big). \notag
\end{align}
Let
$$
R_{i,m}(t, \lambda) = \frac {1}{\sqrt{m}} \sum_{k \neq i} \frac {v^X_k(t)}{\tau^X_j - \tau^X_k} \int^1_0 \hat Z^X_m (s_1, s_2, \lambda) v^X_k (s_2) v^X_j(s_1) ds_1  ds_2 .
$$
Combining \eqref{lem4-1}, \eqref{lem4-3} and \eqref{lem4-5}, we see that for almost all $t\in [0,1]$ and for all $\lambda \in [0,1]$,
$$
\lambda[\hat{v}^X_j(\cdot,\lambda) - v^X_j(t)]  = \frac{m\lambda}{\lfloor m \lambda \rfloor} R_{i,m}(t,\lambda) - \sum_{k \neq i} \frac{\langle \lambda G_{j,m}(\cdot,\lambda),v^X_k \rangle}{\tau^X_j - \tau^X_k}  v^X_k(t) -\frac{1}{2} \| \hat{v}^X_j(\cdot,\lambda) - v^X_j\|^2v_j^X(t),
$$
with the convention that $({m\lambda}/{\lfloor m \lambda \rfloor}) R_{i,m}(t,\lambda)=0$ for $\lambda < 1/m$.  Using this identity and the triangle inequality, we obtain
\begin{align}\label{lem4-v-app}
\sup_{\lambda \in [0,1]} \Big\| &  \lambda[\hat{v}^X_j(\cdot,\lambda) - v^X_j(t)] - \frac{m\lambda}{\lfloor m \lambda \rfloor} R_{i,m}(t,\lambda)\Big\| \\
&\le  \frac{1}{2} \sup_{\lambda \in [0,1]} \lambda\| \hat{v}^X_j(\cdot,\lambda) - v^X_j\|^2 + \sup_{\lambda \in [0,1]} \Big\|\sum_{k \neq i} \frac{\langle \lambda G_{j,m}(\cdot,\lambda),v^X_k \rangle}{\tau^X_j - \tau^X_k}  v^X_k(t)\Big\|.   \notag
\end{align}
The first term on the right-hand side of \eqref{lem4-v-app}  can be bounded by  bound \eqref{v-approx-1x}. In order to bound the second term we have, using the orthonormality of the $v^X_k$ (Parseval's identity) and the fact that  $1/(\tau^X_j - \tau^X_k)^2 \le 1/S_{j,X}^2$ for all $k\ne i$, that
\begin{align*}
\Big\|\sum_{k \neq i} \frac{\langle \lambda G_{j,m}(\cdot,\lambda),v^X_k \rangle}{\tau^X_j - \tau^X_k}  v^X_k(\cdot)\Big\| &= \Big( \sum_{k \neq i} \frac{\langle \lambda G_{j,m}(\cdot,\lambda),v^X_k \rangle^2}{(\tau^X_j - \tau^X_k)^2}   \Big)^{1/2} \\
&\le \frac{1}{S_{j,X}} \Big( \sum_{k \neq i} {\langle \lambda G_{j,m}(\cdot,\lambda),v^X_k \rangle^2}   \Big)^{1/2}  \le \frac{1}{S_{j,X}} \|\lambda G_{j,m}(\cdot,\lambda)\|.
\end{align*}
Therefore
\begin{align*}
\sup_{\lambda \in [0,1]} \Big\|\sum_{k \neq i} \frac{\langle \lambda G_{j,m}(\cdot,\lambda),v^X_k \rangle}{\tau^X_j - \tau^X_k}  v^X_k(\cdot)\Big\| & \le  \sup_{\lambda \in [0,1]} \frac{1}{S_{j,X}} \|\lambda G_{j,m}(\cdot,\lambda)\|
 = O_\mathbb{P}\Big(\frac{\log^{(2/\kappa)}(m)}{m} \Big),
\end{align*}
where the last estimate follows from \eqref{G-approx}. Using these bounds in \eqref{lem4-v-app}, we obtain that
$$
\sup_{\lambda \in [0,1]} \Big\|  \lambda[\hat{v}^X_j(\cdot,\lambda) - v^X_j(t)] - \frac{m\lambda}{\lfloor m \lambda \rfloor} R_{i,m}(t,\lambda)\Big\| = O_\mathbb{P}\Big(\frac{\log^{(2/\kappa)}(m)}{m} \Big).
$$
Given the convention that $({m\lambda}/{\lfloor m \lambda \rfloor}) R_{i,m}(t,\lambda)=0$ for $0\le \lambda < 1/m$, the result follows then by showing that
$$
\sup_{\lambda \in [1/m,1]}  \Big|\frac{m\lambda}{\lfloor m \lambda \rfloor}-1\Big|\Big\| R_{i,m}(t,\lambda)  \Big\|= O_\mathbb{P}\Big(\frac{\log^{(2/\kappa)}(m)}{m} \Big).
$$
This result is a consequence of $\sup_{\lambda \in [1/m,1]}  \big|\frac{m\lambda}{\lfloor m \lambda \rfloor}-1\big| \le 1/m$, and $\sup_{\lambda \in [1/m,1]}\| R_{i,m}(t,\lambda) \|=O_\mathbb{P}(1)$.

\subsection{Proof of Proposition \ref{prop1}} \label{appendix2}

Before proceeding with this proof, we develop some notation as well as a rigorous definition of  the constant $\zeta_j$.  Recall  the notations \eqref{2.8}, \eqref{2.3} and \eqref{2.4} and define the random variables
\begin{equation}\label{2.15a}
 \tilde X_i (s_1, s_2) = X_i (s_1) X_i (s_2) - C^X (s_1, s_2); \quad \tilde Y_i (s_1, s_2) = Y_i(s_1) Y_i(s_2) - C^Y (s_1, s_2).
\end{equation}
Further let the random variables $\overline{X}_i^{(j)}$ and $\overline{Y}_i^{(j)}$ be defined by
\begin{eqnarray}
  \label{2.17}
  \overline {X}_i^{(j)} = \int^1_0 \tilde X_i (s_1, s_2) f^X_j (s_1, s_2) ds_1 ds_2 ~,~
  \overline{Y}_i^{(j)} =  \int^1_0 \tilde Y_i (s_1, s_2) f^Y_j (s_1, s_2) ds_1 ds_2,
\end{eqnarray}
with the functions $f^X_j, f^Y_j$  given by
\begin{eqnarray}
\label{2.19}
f^X_j (s_1,s_2) &=& - v^X_j (s_1)  \sum_{k \neq j} \frac {v^X_k (s_2)}{\tau^X_j - \tau^X_k} \int^1_0 v^X_k (t) v^Y_j (t) dt, \\ \label{2.20}
  f^Y_j (s_1, s_2) &=& - v^Y_j (s_1)   \sum_{k \neq j} \frac {v^Y_k (s_2)}{\tau^Y_j - \tau^Y_k} \int^1_0 v^Y_k (t) v^X_j (t) dt.
\end{eqnarray}
Firstly, we note that by using the orthonormality of the eigenfunctions $v_j^X$ and $v_j^Y$, and Assumption \ref{as-spacing}, we get that
$$
\|f^X_j\|^2=\intt (f^X_j (s_1,s_2))^2 ds_1ds_2  = \|v_j^X\|^2 \sum_{k\ne j} \frac { \left(\int^1_0 v^X_k (t) v^Y_j (t) dt \right)^2}{(\tau^X_j - \tau^X_k)^2}  \le 1/S_{j,X}^2 < \infty.
$$
Let
$$
\sigma_{X,j}^2 = \sum_{\ell = -\infty}^{\infty} \mbox{cov}(\overline{X}_0^{(j)},\overline{X}_\ell^{(j)}),
\mbox{~~
and ~~}
\sigma_{Y,j}^2 = \sum_{\ell = -\infty}^{\infty} \mbox{cov}(\overline{Y}_0^{(j)},\overline{Y}_\ell^{(j)}).
$$
Based on these quantities, $\zeta_j$  is defined as
\begin{align}\label{tau-def-app}
\zeta_j = 2 \sqrt{\frac{\sigma_{X,j}^2}{\theta} + \frac{\sigma_{Y,j}^2}{1-\theta}}.
\end{align}

\begin{proof}[Proof of Proposition \ref{prop1}]
We can write
\begin{eqnarray} \label{2.10}
 \hat Z^{(j)}_{m,n} (\lambda) &=& \sqrt{m+n} \int^1_0 (\hat D^{(j)}_{m,n} (t,\lambda))^2 - \lambda^2 D^2_j(t))dt \\ \nonumber
    &=& \sqrt{m+n} \ \Big \{ \int^1_0 (\hat D^{(j)}_{m,n}(t,\lambda) - \lambda D_j(t))^2 + 2 \lambda D_j(t) (\hat D^{(j)}_{m,n} (t,\lambda) - \lambda D_j(t))^2 dt \\ \nonumber
    &=& \sqrt{m+n} \int^1_0  (\tilde D^{(j)}_{m,n} (t,\lambda))^2 dt + 2 \lambda \sqrt{m+n} \int^1_0 D_j(t) \tilde D_{m,n} ^{(j)}(t, \lambda)dt + o_{\mathbb{P}}(1)
\end{eqnarray}
uniformly with respect to $\lambda \in [0,1]$,
where the process $\tilde D_{m,n}^{(j)}(t, \lambda)$ is defined in \eqref{2.8} and Proposition \ref{d-approx-1} was used in the  last equation.
Observing  \eqref{2.11}  gives
\begin{equation}\label{2.12}
 \hat Z^{(j)}_{m,n} (\lambda) = \tilde Z^{(j)}_{m,n} (\lambda) + o_{\mathbb{P}}(1)
\end{equation}
uniformly with respect to $\lambda \in [0,1]$,  where the process $\tilde Z_{m,n}^{(j)}$ is given by
\begin{equation}\label{2.14}
  \tilde Z^{(j)}_{m,n} (\lambda) = 2 \lambda \sqrt{m+n}  \int^1_0 D_j(t) \tilde D^{(j)}_{m,n} (t,\lambda) dt .
\end{equation}
 Consequently the assertion of Proposition \ref{prop1} follows from the weak convergence
 $$
  \{ \tilde Z^{(j)}_{m,n} (\lambda) \}_{\lambda \in [0,1]} \rightsquigarrow \{ \lambda \zeta_j \mathbb{B} (\lambda) \}_{\lambda \in [0,1]}.
  $$
We obtain, using the orthogonality of the eigenfunctions and the notation \eqref{dj},  that
\begin{eqnarray}\label{z-mn-def}
\nonumber
  \tilde{Z}^{(j)}_{m,n} (\lambda) &=& 2 \lambda \sqrt{m+n} \Big \{ \frac {1}{\sqrt{m}} \int^1_0 \hat Z^X_m (s_1, s_2, \lambda) \int^1_0 D_j(t) \sum_{k \neq j} \frac {v^X_k(t)}{\tau^X_j - \tau^X_k}  dt v^X_j (s_1) v^X_k (s_2) ds_1 ds_2 \\ \nonumber
    &&-   \frac {1}{\sqrt{n}} \int^1_0 Z^Y_n (s_1, s_2, \lambda) \int^1_0 D_j(t) \sum_{k \neq j} \frac {v^Y_k(t)}{\tau^Y_j - \tau^Y_k}    dt v^Y_j (s_1) v^Y_k(s_2) ds_1ds_2 \Big \} \\ \label{2.16}
    &=& 2 \lambda \sqrt{m+n} \Big \{ \frac {1}{m} \sum_{i=1}^{\lfloor m \lambda \rfloor} \overline{X}_i^{(j)} + \frac {1}{n} \sum_{i=1}^{\lfloor n \lambda \rfloor} Y_i^{(j)} \Big \},
\end{eqnarray}
where the random variables $\overline{X}_i^{(j)}$ and $\bar{Y}_i^{(j)}$ are defined above. We now aim to establish that
\begin{align}\label{x-conv}
\Big \{
\frac {1}{\sqrt{m}} \sum_{i=1}^{\lfloor m \lambda \rfloor} \overline{X}_i^{(j)}  \Big\}_{\lambda \in [0,1]}
\rightsquigarrow \sigma_{X,j}  \{ \mathbb{B}^X(\lambda) \}_{\lambda \in [0,1]},
\end{align}
where $\mathbb{B}^X$ is a standard Brownian motion on the interval $[0,1]$. In the following we use the symbol $\| \cdot \|$ simultaneously for $L^2$-norm on the space $L^2 ([0,1])$ and $L^2([0,1]^2)$ as the particular meaning is always clear from the context.
Firstly, we note that by using the orthonormality of the eigenfunctions $v_j^X$ and $v_j^Y$, and Assumption \ref{as-spacing}, we get that
$$
\|f^X_j\|^2=\intt (f^X_j (s_1,s_2))^2 ds_1ds_2  = \|v_j^X\|^2 \sum_{k\ne j} \frac { \left(\int^1_0 v^X_k (t) v^Y_j (t) dt \right)^2}{(\tau^X_j - \tau^X_k)^2}  \le 1/S_{j,X}^2 < \infty.
$$
The following calculation is similar to Lemma A.3 in \cite{aue:rice:sonmez:eigen:2018}. Let
$$
\tilde X_i^{(m)}(t,s) = X_{i,m}(t)X_{i,m}(s) - \mathbb{E}X_0(t)X_0(s),
$$
where
$\{ X_{i,m} \}_{i  \in \mathbb{Z}}$ is the  mean zero  $m$-dependent sequence used  in definition of  $m$-approximability (see Assumption \ref{edep}).
 Moreover, if $q=p/2$ with $p$ given in Assumption \ref{edep}, then we have by the triangle inequality and Minkowski's inequality that
\begin{align}\label{l2-2}
\big \{ \mathbb{E}\|\tilde X_i - \tilde X_i^{(m)}  \|^q\big \}^{1/q} &\le \big \{ \mathbb{E}( \|X_i(\cdot)(X_i(\cdot)-X_{i,m}(\cdot))\| + \|X_{i,m}(\cdot)(X_i(\cdot)-X_{i,m}(\cdot))\| )^q \big \}^{1/q} \\
&\le  \big \{ \mathbb{E}( \|X_i(\cdot)(X_i(\cdot)-X_{i,m}(\cdot))\|^q \big \}^{1/q} + \big \{\mathbb{E}\|X_{i,m}(\cdot)(X_i(\cdot)-X_{i,m}(\cdot))\|^q \big \}^{1/q}. \notag
\end{align}
Using the definition of the norm in $L^2([0,1])$, it is clear that
$$
\|X_i(\cdot)(X_i(\cdot)-X_{i,m}(\cdot))\|= \|X_i\|\|X_i-X_{i,m}\|,
$$
 and hence we obtain from the Cauchy--Schwarz inequality applied to the expectation on the concluding line of \eqref{l2-2} and stationarity that
\begin{align*}
( \mathbb{E}( \|X_i(\cdot)(X_i(\cdot)-X_{i,m}(\cdot))\|^q)^{1/q} + (\mathbb{E} & \|X_{i,m}(\cdot)(X_i(\cdot)-X_{i,m}(\cdot))\|^q)^{1/q} \\
&\le (\mathbb{E}\|X_0\|^{2q})^{1/2q}(\mathbb{E}\|X_0-X_{0,m}\|^{2q})^{1/2q}.
\end{align*}
It follows from this and \eqref{l2-2} that
\begin{align}\label{rho-s}
\sum_{m=1}^\infty (\mathbb{E}\|\tilde X_i - \tilde X_i^{(m)}\|^q)^{1/q} \le (\mathbb{E}\|X_0\|^{p})^{1/p} \sum_{m=1}^\infty (\mathbb{E}\|X_0-X_{0,m}\|^{p})^{1/p} < \infty.
\end{align}
Now let $\overline {X}_{i,m}^{(j)}$ be defined as $\overline {X}_{i}^{(j)}$ in \eqref{2.17} with $X_i$ replaced by $X_{i,m}$. We obtain using the Cauchy--Schwarz inequality that
$$
(\mathbb{E} [\overline {X}_{i}^{(j)}- \overline {X}_{i,m}^{(j)}]^q)^{1/q}  \le \|f^X_j\| (\mathbb{E}\|\tilde X_i - \tilde X_i^{(m)}  \|^q)^{1/q}.
$$
By \eqref{rho-s} it follows that
$$
\sum_{m=1}^{\infty}(\mathbb{E} [\overline {X}_{i}^{(j)}- \overline {X}_{i,m}^{(j)}]^q)^{1/q} < \infty
$$
and therefore the sequence $\overline {X}_{i}^{(j)}$ satisfies the assumptions of Theorem 3 in
 \cite{wu:2005}. By this result the weak convergence in \eqref{x-conv} follows. By the same arguments it follows that
\begin{align}\label{y-conv}
\Big \{ \frac {1}{\sqrt{n}} \sum_{i=1}^{\lfloor n \lambda \rfloor} \overline{Y}_i^{(j)} \Big \}_{\lambda \in [0,1]}
 \rightsquigarrow \sigma_{Y,j} \{Ê\mathbb{B}^Y(\lambda)\}_{\lambda \in [0,1]},
\end{align}
where $\mathbb{B}^Y$ is a standard Brownian motion on the interval $[0,1]$ and
$$
\sigma_{Y,j}^2 = \sum_{\ell = -\infty}^{\infty} \mbox{cov}(\overline{Y}_0^{(j)},\overline{Y}_\ell^{(j)}).
$$
Since the   sequences $\{ X_i \}_{i \in \mathbb{R}}$ and $\{ Y_i \}_{i \in \mathbb{R}}$ are independent, we have that \eqref{x-conv} and \eqref{y-conv} may be taken to hold jointly where the Brownian motions $\mathbb{B}^X$ and $\mathbb{B}^Y$ are independent.
It finally follows from this and \eqref{z-mn-def} that
\begin{align*}
\{ \tilde Z^{(j)}_{m,n} (\lambda)\}_{\lambda \in [0,1]} &\rightsquigarrow \Big \{ 2 \lambda \Big ( \frac {\sigma_{X,j}}{\sqrt{\theta}} \mathbb{B}^X (\lambda) + \frac {\sigma_{Y,j}}{\sqrt{1 - \theta}} \mathbb{B}^Y (\lambda)\Big ) \Big \}_{\lambda \in [0,1]}~ \stackrel{\cal D}{=} ~\big \{ \lambda\zeta_j \mathbb{B}(\lambda)  \big \}_{\lambda \in [0,1]}~,
\end{align*}
which completes the proof of Proposition \ref{prop1}.
\end{proof}

\bigskip
{\bf  Acknowledgements} This work has been supported in part by the
Collaborative Research Center ``Statistical modeling of nonlinear
dynamic processes'' (SFB 823, Teilprojekt A1,C1) of the German Research Foundation
(DFG), and the Natural Sciences and Engineering Research Council of Canada, Discovery Grant. We gratefully acknowledge Professors Xiaofeng Shao and Xianyang Zhang for sharing code to reproduce their numerical examples with us.

\itemsep=0.015pt

\end{document}